\newtheorem{thm}{Theorem}[section]
\newtheorem{lem}[thm]{Lemma}
\theoremstyle{definition}
\newtheorem{teo}{Theorem}
\newtheorem{prp}{Proposition}
\newtheorem{cor}{Corollary}
\newtheorem{con}{Conjecture}
\newtheorem{dfn}{Definition}
\newtheorem{rem}{Remark}
\title[On the sectional curvature along central configurations]
{On the sectional curvature along central configurations} 
\author[Connor Jackman]{Connor Jackman}
\address[Jackman]{Mathematics Department, University of California,
4111 McHenry
Santa Cruz, CA 95064, USA}
\email{cfjackma@ucsc.edu}
\author[Josu\'e Mel\'endez]{Josu\'e Mel\'endez}
\address[Mel\'endez]{Departamento de Matem\'aticas, UAM-Iztapalapa, 09340, M\'exico}
\email{jms@xanum.uam.mx}
\begin{document}
\date{\today}
\maketitle
\begin{abstract} 
In this paper we characterize planar central configurations in terms of a sectional curvature value of the Jacobi-Maupertuis metric. This characterization works for the $N$-body problem with general masses and any $1/r^{\alpha}$ potential with $\alpha> 0$. We also obtain dynamical consequences of these curvature values for relative equilibrium solutions. These curvature methods work well for strong forces ($\alpha \ge 2$).
\end{abstract}

\section{Introduction}

Since Euler and Lagrange, central configurations form a main theme in studies of the $N$-body problem.  Such configurations are characterized by the property that upon dropping the bodies from rest, they homothetically shrink to a total collision. Those of the 3-body problem have long been well known. They are exactly equilateral triangles due to Lagrange and certain collinear configurations found by Euler.

Every central configuration leads to \textit{homographic} solutions of the $N$-body problem: ones whose configuration evolves only by rotation or scaling. Such solutions are the only explicit solutions known for the $N>2$ body problems. See the nice references \cite{ Chenc, MoeckelCC, Mblowup} for more on central configurations.


By the {\em planar $N$-body problem with a $1/r^\alpha$ potential}, we mean the movement of $N$ point masses $q_1,...,q_N\in \mathbb{C}$, under the equations of motion \begin{equation}\label{eq:mov}
    m_k\ddot q_k = \frac{\partial U}{\partial q_k}
\end{equation} where $$U := \frac{1}{\alpha}\sum_{i<j} \frac{m_im_j}{|q_i-q_j|^\alpha}$$ is a $1/r^\alpha$ potential\footnote{For $\alpha=0$ one normally takes $U=-\sum_{i<j} m_im_j \log |q_i-q_j|$. Here we will consider $\alpha > 0$.} and $m_k>0$ are the masses. A {\em central configuration} is a configuration $q=(q_1,...,q_N)$ such that $\nabla U(q) = \lambda q$ for some $\lambda\in \mathbb{R}$.

Here we will study these central configurations using the {\em Jacobi-Maupertuis principle}. This principle reparametrizes solutions of a natural mechanical system at a fixed energy level as geodesics of a certain metric (eq. \eqref{JM} below), which we call the JM-metric for short.

It is well known that the sectional curvature values of a Riemannian metric at a point determine the local behavior of nearby geodesics, governing  the convergence or divergence of neighboring geodesics passing through this point. Namely, negative values imply nearby geodesics diverge more than those in a Euclidean plane, while positive values imply they diverge less (see Figure \ref{fig2}). These comparisons are made by examining growths of Jacobi Fields, which in turn are used to express the linearization of the geodesic flow, see for instance \cite{Arnold, Lee, Paternain}.  In this paper we examine some sectional curvatures of the JM-metric at central configurations.

\begin{figure}[h]
\begin{center}
\includegraphics[width=0.65\textwidth]{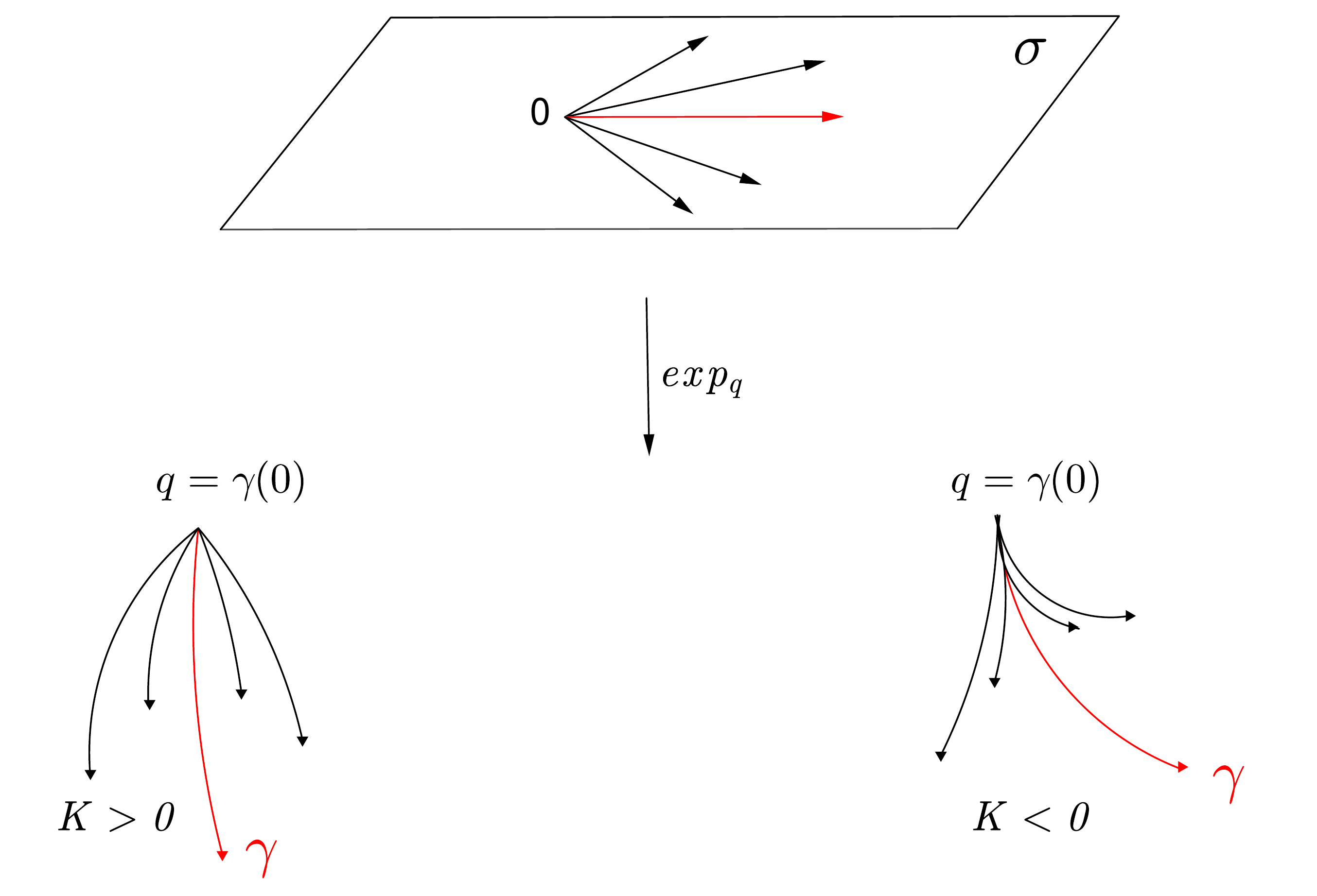}
\end{center}
\caption{This figure depicts the local behavior of geodesics on the 2-dimensional submanifold $\Sigma$ consisting of geodesic segments tangent to a plane $\sigma\subset T_qM$. When the sectional curvature $K_q(\sigma)$ is positive geodesics spread apart less than the corresponding rays of $\sigma$ (on the left), while they spread apart more when $K_q(\sigma)<0$ (on the right).}
\label{fig2}
\end{figure}

The idea of computing curvatures of the JM-metric to obtain dynamical consequences has been explored in other works. For three equal masses subject to a strong force (with $\alpha=2$), R. Montgomery showed in \cite{M2005} that, upon reducing by symmetries, the JM-curvatures are negative. This negative curvature allows a symbolic dynamics description of the orbits. Following  Montgomery's work,
the authors have found in \cite{JM} similar negatively curved circumstances for the collinear and parallelogram subproblems of the 4-body strong force problem.

With regard to central configurations, in his thesis \cite{Ong}, Ong Chong Pin examined curvatures of the JM-metric for central force problems, and over the Lagrange relative equilibrium solution of the classic ($\alpha=1$) 3-body problem. Later, M. Barbosu and B. Elmabsout in \cite{Barbosu} also computed some sectional curvatures along this Lagrange solution with equal masses, observing that certain energy values lead to negative sectional curvatures over this solution and noting that this negative curvature leads to `instability' of these solutions. More precisely, negative sectional curvatures over a suitable set of planes along the orbit leads to exponential growth of certain Jacobi fields in forwards or backwards time. Consequently the linearized Poincar\'e return map of the geodesic flow may have eigenvalues off of the unit circle. This instability is called \textit{spectral instability}:

\begin{dfn}
A periodic orbit is spectrally unstable if its linearized first return map has an eigenvalue $\lambda$ with $|\lambda|\ne 1$.
\end{dfn}

There is also the weaker notion of \textit{linear instability}:

\begin{dfn}
A periodic orbit is linearly unstable if its linearized first return map is not diagonalizable or it is spectrally unstable.
\end{dfn}


Since reparametrizing orbits has no effect on the Poincar\'e first return map, computations using the geodesic flow of the JM-metric will lead to the same eigenvalues as those associated to the flow of the original equations of motion on a fixed energy level. The reference \cite{Kob} allows a computation of the sectional curvatures through complex planes (Proposition \ref{prop3} below), from which we obtain the following  Barbosu-Elmabsout inspired result:

\begin{teo}
\label{inst}
 Consider the planar $N$-body problem under a strong force $1/r^\alpha$ potential with $\alpha\ge2$ and with masses $m_k>0$.
 
When $\alpha > 2$, all relative equilibria are spectrally unstable.

When $\alpha = 2$, all relative equilibria are linearly unstable (after reductions).
 
\end{teo}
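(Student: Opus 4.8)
The plan is to use the Jacobi-Maupertuis reparametrization to turn the relative equilibrium solution into a closed geodesic, compute the relevant sectional curvatures along it via the stated Proposition~\ref{prop3}, and then translate curvature sign information into eigenvalue information for the linearized first return map. Let me think through the structure carefully.

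A relative equilibrium is a homographic solution where the configuration rotates rigidly (uniform rotation) — the configuration stays a central configuration, just rotating at constant angular velocity. Under the JM-metric at a fixed energy level, this becomes a periodic geodesic (a closed orbit). The key tool is Proposition~\ref{prop3}, which computes sectional curvatures "through complex planes" — meaning planes $\sigma = \mathrm{span}\{v, iv\}$ where $i$ is multiplication by the imaginary unit, reflecting the complex structure of $\mathbb{C}^N$ since the bodies live in $\mathbb{C}$.

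The approach, concretely:

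First, I'd set up the JM-metric. The JM-metric at energy $h$ is $g_{JM} = (h + U) \, ds^2$ where $ds^2$ is the mass metric $\sum m_k |dq_k|^2$ (kinetic energy metric). Geodesics of this metric, reparametrized, are solutions of the Newtonian system at energy $-h$ (or $h$, depending on sign conventions). The relative equilibrium corresponds to a closed geodesic because the rotating central configuration, being periodic in shape space / after reduction, traces a loop.

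Second, I'd compute the sectional curvature $K(\sigma)$ for the distinguished complex planes along this geodesic. Proposition~\ref{prop3} presumably gives a clean formula for $K$ in terms of $U$, its gradient, and the Hessian of $U$ evaluated at the central configuration. The crucial homogeneity fact: $U$ is homogeneous of degree $-\alpha$, so by Euler's identity $q \cdot \nabla U = -\alpha U$, and the Hessian $\nabla^2 U$ acts on the radial direction $q$ by $\nabla^2 U \cdot q = -(\alpha+1)\nabla U$. At a central configuration $\nabla U(q) = \lambda q$, so these relations become purely algebraic in $\alpha$ and $\lambda$. I'd substitute the central configuration condition into the curvature formula and extract the explicit $\alpha$-dependence. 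The expectation — matching the Barbosu–Elmabsout observation for $\alpha=1$ extended here — is that for $\alpha \ge 2$ the sectional curvature through the appropriate complex plane is negative (or, for $\alpha = 2$, perhaps identically zero in a degenerate way, forcing a Jordan block rather than hyperbolic eigenvalues).

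Third — and this is where the two cases split — I'd convert curvature signs to spectral conclusions. The linearized geodesic flow is governed by the Jacobi equation $\ddot J + R(J, \dot\gamma)\dot\gamma = 0$ along the closed geodesic $\gamma$. When the sectional curvature $K(\sigma)$ along the relevant invariant plane is strictly negative (the $\alpha > 2$ case), the Jacobi equation in that plane has the qualitative form of $\ddot J = (\text{positive})\, J$, yielding exponentially growing and decaying solutions, hence a monodromy (first return) map with a real eigenvalue $\lambda$ with $|\lambda| \ne 1$ — precisely spectral instability. For $\alpha = 2$, the strong-force homogeneity makes the curvature along the relevant plane vanish (the $-\alpha U$ and $(\alpha+1)$ factors conspire to cancel the curvature contribution in this direction), so I cannot get exponential growth; instead the Jacobi equation becomes $\ddot J = 0$ in that direction, producing a \emph{linearly} (polynomially) growing Jacobi field. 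A linear-in-time Jacobi field that is not periodic forces the first return map to have a nontrivial Jordan block for the eigenvalue $1$, i.e. to be non-diagonalizable — which is exactly the definition of linear instability invoked in the statement. The phrase "after reductions" for $\alpha = 2$ signals that one must first quotient by the obvious symmetries (rotation, scaling) so that the $\alpha=2$ scale-invariance is handled and the relevant plane is genuinely transverse; the linear Jacobi field survives this reduction as a non-diagonalizable block.

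The main obstacle I anticipate is the careful bookkeeping in the third step: relating an explicit sectional-curvature sign along a specific plane to the eigenvalue structure of the monodromy map. Negative curvature in a single invariant 2-plane does not automatically hand you an eigenvalue off the unit circle unless one verifies that the plane is genuinely invariant under the linearized flow (or that the associated Jacobi field decouples from the others). I would therefore need to exhibit an invariant splitting: the complex-plane structure from Proposition~\ref{prop3} should provide a parallel (or at least flow-invariant) subbundle along $\gamma$ on which the Jacobi equation restricts to a scalar equation governed by $K(\sigma)$, so that negativity of $K$ directly gives hyperbolic monodromy. For $\alpha=2$ the delicate point is ensuring the linearly-growing Jacobi field is not tangent to the orbit or killed by the reduction (which would make it a spurious $1$-eigenvector rather than genuine instability); establishing that this Jacobi field is transverse and non-periodic, hence genuinely produces non-diagonalizability, is the crux of that case.
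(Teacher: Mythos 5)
Your outline reproduces the paper's strategy in broad strokes (JM-reparametrize the relative equilibrium into a closed geodesic, feed the curvature value of Proposition \ref{prop3} into a scalar Jacobi equation, get a hyperbolic eigenvalue for $\alpha>2$ and a Jordan block for $\alpha=2$), but it leaves two genuine gaps, one of which you flag yourself as the crux without closing it. The invariant splitting you ask for is not something to be hoped for from ``the complex-plane structure''; it is supplied by a concrete geometric fact: since $q$ is a central configuration, $T(\mathbb{C}q\setminus 0)$ is an invariant subspace of the dynamics at every energy (it carries all the homographic motions, which reduce to a $1/r^\alpha$ central force problem), and therefore $\mathbb{C}q\setminus\{0\}$ is a \emph{totally geodesic} surface for the JM-metric, containing the relative equilibrium as a closed geodesic. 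Total geodesy is exactly what makes ambient Jacobi fields with initial data tangent to the surface stay tangent to it, so the linearized return map has an invariant subspace on which the normal (radial) Jacobi field $J=\lambda\gamma$ obeys the scalar equation $\ddot\lambda=-K\lambda$ with $K=K_{q}(q,iq)$, constant along the orbit by rotation invariance of the metric; eigenvalues of this restriction are eigenvalues of the full monodromy. Without exhibiting this surface, negative curvature through a single plane proves nothing, as you yourself note.

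The second gap is the sign analysis, where your proposed mechanism is wrong. The value in Proposition \ref{prop3} is proportional to $h$, so everything hinges on the energy of the relative equilibrium, which you never compute. From $\ddot q=-\omega^2 q=\nabla U(q)$ and Euler's identity $\langle\nabla U(q),q\rangle=-\alpha U(q)$ one gets $\omega^2\|q\|^2=\alpha U(q)$, hence $h=\tfrac12\omega^2\|q\|^2-U(q)=\bigl(\tfrac{\alpha}{2}-1\bigr)U(q)$: strictly positive precisely when $\alpha>2$, and zero precisely when $\alpha=2$. Thus $K_q(q,iq)=-c^2<0$ for $\alpha>2$, while for $\alpha=2$ the curvature vanishes \emph{because periodic relative equilibria exist only at $h=0$} --- not because ``the $-\alpha U$ and $(\alpha+1)$ factors conspire to cancel'' in the Hessian. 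Without the virial computation your claim of negativity for $\alpha>2$ is unsupported as well. Your final worry for $\alpha=2$ --- that the linearly growing Jacobi field might be killed by reduction --- is resolved by orthogonality: the radial direction $q$ is mass-orthogonal to the rotation direction $iq$ and to the translation directions $(1,\dots,1)$, $(i,\dots,i)$, so the non-diagonalizable block sits in the directions that survive the quotient (cf.\ Remark \ref{rem1}); the paper's proof then reads off the eigenvalue $e^{2\pi c/\omega}\neq 1$ for $\alpha>2$, and the $2\times 2$ unipotent Jordan block from $\ddot\lambda=0$ for $\alpha=2$, exactly as you anticipate.
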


\begin{rem}
\label{rem1}

In general, the homographic solutions at a positive energy level lie in a non-compact negatively curved totally geodesic surface, $\mathbb{C} q\backslash 0$, and the orbits are the geodesics on this surface under the JM-metric (see figure \ref{unbd}). Bounded motions with positive energy are only possible when we have $\alpha > 2$, while for $\alpha<2$ bounded motion occurs only for negative energies. The loss of this negatively curved surface containing periodic homographic motions, makes our curvature method harder to apply to $\alpha <2$ potentials.

The rotation and translation symmetries lead to 6 eigenvalues equal to 1 of the Poincar\'e map, whose eigenspaces are associated to the variations tangent to $\text{span}\{ iq, (1,...,1), (i,...,i)\}$. Center of mass drift gives linear instability in the $\text{span}\{(1,...,1), (i,...,i)\}$ directions. When $\alpha =2$ we find linear instability in the $\text{span}\{ iq, (1,...,1), (i,...,i)\}^{\perp}$ directions (those remaining after reductions).

\end{rem}

\begin{figure}[h]
\begin{center}
\includegraphics[width=0.65\textwidth]{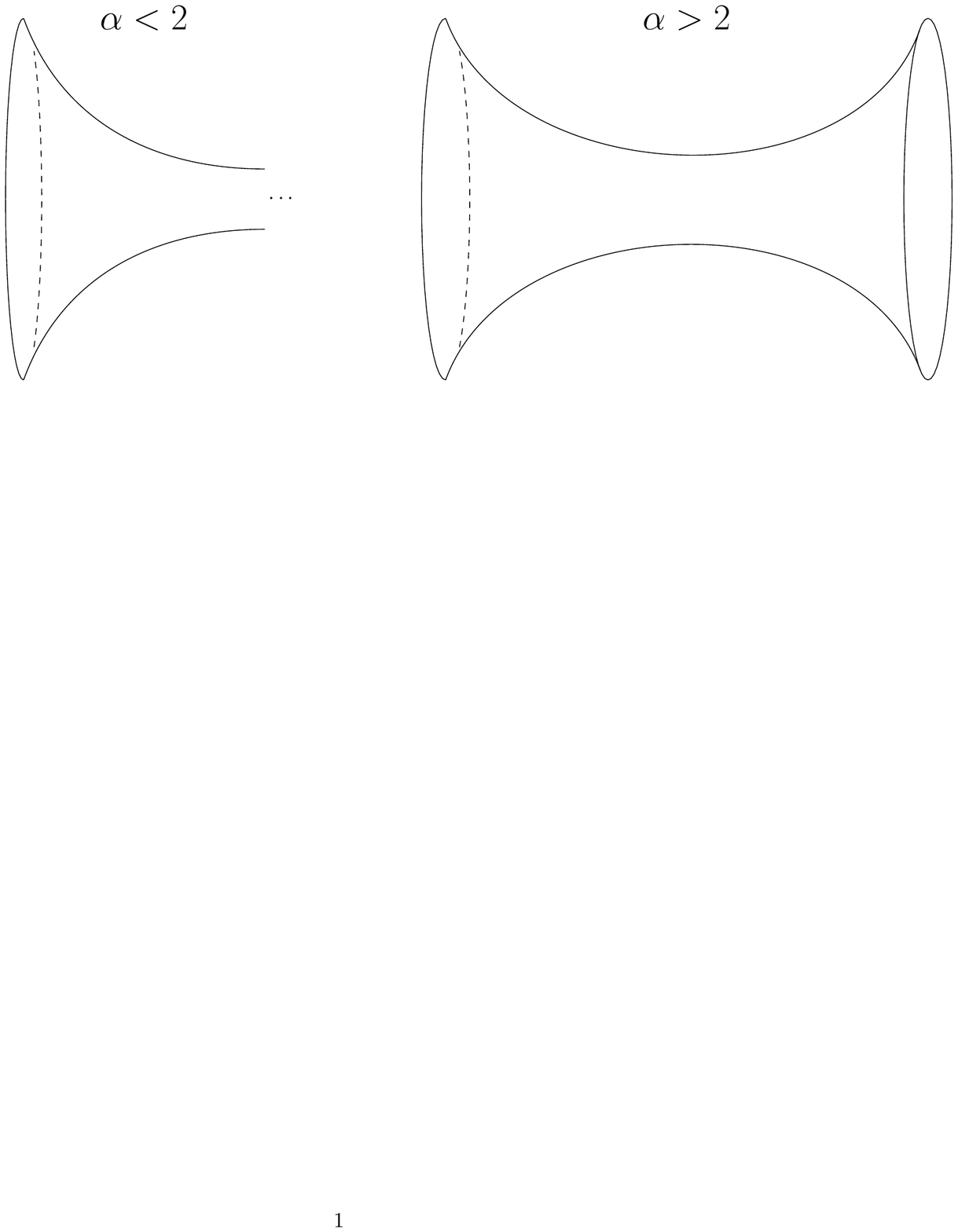}
\end{center}
\caption{The surfaces $\mathbb{C} q\backslash 0$ when $h>0$ are, under the JM-metric, totally geodesic surfaces of negative curvature. When $\alpha>2$, the `waist' is the relative equilibrium solution. The boundaries correspond to collision or escape to infinity.}
\label{unbd}
\end{figure}

\begin{con}
\label{con}

Consider the planar $N$-body problem under a $1/r^\alpha$ potential with masses $m_k>0$, and $0<\alpha<2$. Let $q$ be a central configuration.

Then at least one of the two classes of periodic homographic motions, $z(t)q$, are spectrally unstable:

(i) those near the circular relative equilibrium solution,

(ii) those near the homothetic `total collapse' solution.

\end{con}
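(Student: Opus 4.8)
The plan is to recast spectral stability as a statement about the monodromy of the transverse Jacobi equation along the homographic orbits, and to locate the instability at whichever of the two ends the geometry forces it. After passing to the JM-metric \eqref{JM} and reducing by the rotation symmetry, the homographic motions $z(t)q$ sweep out the surface $\mathbb{C}q\setminus 0$ of Remark \ref{rem1}, on which the reduced dynamics is a planar central-force problem in the single variable $z$; for $0<\alpha<2$ its bounded (negative-energy) solutions form a one-parameter family indexed by angular momentum, degenerating at one extreme to the circular relative equilibrium (i) and at the other (angular momentum $\to 0$) to the radial homothetic collapse (ii). After reduction these orbits are periodic, so their linearized return maps are defined and, since reparametrization does not affect the Poincar\'e map, their JM-spectral stability coincides with that of the original flow \eqref{eq:mov}.

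I would then isolate the transverse directions. Modulo the neutral symmetry directions $\operatorname{span}\{iq,(1,\dots,1),(i,\dots,i)\}$ treated in Remark \ref{rem1}, the remaining variations split, through the complex structure, into complex planes $\operatorname{span}\{w,iw\}$ with $w\perp_{\mathbb{C}}q$, and the transverse Jacobi equation becomes a periodic linear system whose coefficient is the curvature operator of these planes, computed by Proposition \ref{prop3}. The key algebraic simplification is that the central-configuration identity $\nabla U=\lambda q$ annihilates the \emph{transverse} gradient of $U$; consequently the transverse curvature reduces to the shape-Hessian of $U$ on the mass ellipsoid, divided by $U+h$, together with a definite \emph{positive} contribution coming from the radial gradient and proportional to $\alpha^2/|q|^2$. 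A hyperbolic transverse monodromy then yields an eigenvalue off the unit circle, hence spectral instability (see \cite{Arnold,Lee,Paternain} for the Jacobi-field description of the linearized flow).

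The argument then splits according to the type of the central configuration. If $q$ is a saddle of $U$ on the mass ellipsoid (for instance the Euler collinear configurations), the shape-Hessian is negative in some transverse direction and the associated curvature at the circular relative equilibrium is negative enough that the constant-coefficient transverse system at (i) has an eigenvalue off the unit circle: class (i) is spectrally unstable. If instead $q$ is a minimum (for instance the Lagrange equilateral), one turns to the collapse end: as $|z|\to 0$ the potential $U\sim C|z|^{-\alpha}$ blows up, the positive radial term $\sim \alpha^2/|z|^2$ dominates the curvature operator, and the transverse curvature becomes large and negative throughout the near-collision passage of a sufficiently eccentric orbit---precisely the mechanism that makes $\alpha\ge 2$ globally negative in Theorem \ref{inst}, here concentrated near collapse. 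This forces the transverse monodromy of class (ii) off the unit circle. In either case at least one of (i), (ii) is spectrally unstable, which is the assertion.

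The principal obstacle is the gyroscopic coupling intrinsic to a \emph{rotating} orbit. Along $z(t)q$ the transverse complex planes do not decouple: the curvature operator mixes them, and this Coriolis-type coupling can stabilize planes of negative sectional curvature---indeed the Routh-stable Lagrange relative equilibria show that negative shape-curvature alone cannot force instability of (i). The heart of the proof is therefore a quantitative domination estimate at the collapse end: one must show that as the angular momentum tends to zero the blow-up $\sim\alpha^2/|z|^2$ of the curvature near collision overwhelms the bounded gyroscopic terms, driving the full transverse discriminant out of $[-2,2]$, and one must do so uniformly in the mass distribution and in $\alpha\in(0,2)$. Controlling the two degenerate regions of the reduced orbit---the collision passage and the zero-velocity (Hill) boundary $U+h=0$, where both the curvature and the arclength parametrization are singular---by an appropriate regularization is the technical crux, and is where I expect the real difficulty to lie.
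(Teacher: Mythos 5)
You are attempting to prove what the paper itself presents only as Conjecture \ref{con}: the paper contains no proof of this statement, only the motivating heuristic of Remarks \ref{rem} and \ref{conj}. There, using Lemma \ref{holtokob}, Cauchy--Schwarz in eq.~\eqref{logCS}, and the identity $K_q(q,iq)+K_q(v,iv)=K_q(q,v)+K_q(iq,iv)$ from eq.~\eqref{eq:curvature1}, the authors show that for $h$ in a one-sided neighborhood of $0$ at least one of the planes $\mathrm{span}\{iq,iv\}$ (tangent to the relative equilibrium) or $\mathrm{span}\{q,v\}$ (tangent to the homothetic collapse) has negative sectional curvature --- and then they explicitly flag the missing step: ``it remains to be seen whether the family of planes lies tangent to a family of geodesics, i.e.\ whether there are Jacobi fields staying close to the family of planes for one period of the motion.'' Your proposal reconstructs essentially this same heuristic (negative transverse curvature at one of the two ends, to be converted into hyperbolic monodromy), but it does not close that gap; your final paragraph concedes as much. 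Negative sectional curvature over a prescribed family of planes along a rotating geodesic forces Jacobi-field growth only if the Jacobi fields remain tangent to those planes over a full period, and the curvature operator mixes the transverse complex planes --- your ``gyroscopic coupling'' --- so until a quantitative domination or tangency estimate is actually proved, there is no instability conclusion. This is the difference between the situation here and Theorem \ref{inst}, where the relevant plane $\mathrm{span}\{q,iq\}$ is tangent to the totally geodesic surface $\mathbb{C}q\setminus 0$, so the normal Jacobi field genuinely satisfies the scalar equation $\ddot\lambda=-K\lambda$ with the known curvature; no analogous invariance is available for the planes in Remark \ref{conj}, and you do not supply one.

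Beyond the main gap, both halves of your case split are unsubstantiated, and the dichotomy itself differs from the paper's. The paper's alternative is governed by which of $K_q(q,v)$, $K_q(iq,iv)$ is negative, not by whether $q$ is a saddle or minimum of $U$ on the mass ellipsoid. In the saddle case you assert that a negative shape-Hessian direction drives the constant-coefficient transverse system at the relative equilibrium off the unit circle, but in the rotating frame Coriolis terms can gyroscopically stabilize negative Hessian directions --- the very phenomenon you invoke via the Routh-stable Lagrange solutions --- so a negative Hessian direction alone yields no eigenvalue with $|\lambda|\neq 1$; relating the Morse index of the central configuration to spectral instability is itself a delicate matter (cf.\ the Maslov-index literature cited in Remark \ref{rem}). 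In the collapse case, the claim that curvature blowing up like $\alpha^2/|z|^2$ forces the transverse discriminant out of $[-2,2]$ ignores that for $0<\alpha<2$ the near-collision passage of a highly eccentric orbit is fast, so large negative curvature acts only over a short parameter interval; without an estimate comparing curvature magnitude against passage time the monodromy conclusion does not follow (and your signs wobble: the radial term is first called a ``definite positive contribution,'' then said to make the curvature ``large and negative''). In short, your approach matches the paper's motivating heuristic but, like the paper, stops short of a proof; the statement remains a conjecture.
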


\begin{rem}
\label{rem} Investigating stability properties of homographic solutions has been well studied, (see e.g. \cite{problems}: problem 15). In particular the Lagrange configuration when $\alpha =1$ by perturbation and numerical methods (see \cite{MS, Roberts} and references therein).  More recently, Hu and Sun \cite{Hu} have applied Maslov-type index theory to study the Lagrange motions (see also \cite{Bar, Aniso} for other recent applications of Maslov-type index theory).

It can be shown (see remark \ref{conj}) that for any given central configuration $q$, there is either a family of planes with negative sectional curvature tangent to the relative equilibrium solution through $q$ or a family of planes with mostly negative sectional curvatures tangent to the homothetic total collapse solution through $q$. This motivated the conjecture, since we expect the negative curvatures over these planes to give growth of Jacobi fields and instability. However, it remains to be seen whether the family of planes lies tangent to a family of geodesics i.e. whether there are Jacobi fields staying close to the family of planes for one period of the motion.

For comparison, figure \ref{huinst} (established by Martinez et. al \cite{MS}) shows detailed stability properties of the Lagrange solution for the classical $\alpha =1$ force law, and fits with our conjecture.

Conjecture \ref{con} may also be stated in terms of the scale invariant Dziobek constant: $D_{\alpha}:=hC^{2\alpha/(2-\alpha)}$, where $h$ is the energy and $C$ the angular momentum. Let $m$ be the value of $D_{\alpha}$ attained at the relative equilibrium solution over $q$. Then the conditions of $(i), (ii)$ translate to: the existence of an $\epsilon>0$ such that: $(i)~~D_{\alpha}<0$ and $|D_{\alpha}-m|<\epsilon$;  $(ii)~~D_{\alpha}\in (-\epsilon,0]$.

\end{rem}


\begin{figure}[h]
\begin{center}
\includegraphics[width=0.3\textwidth]{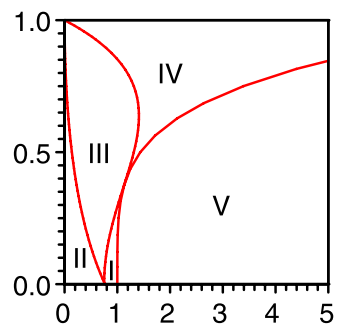}
\end{center}
\caption{This figure (6.1 from \cite{Hu}) shows stability properties of Lagrange solutions for $\alpha=1$ and different values of eccentricities (vertical axis) and masses (the horizontal axis is the mass parameter $\beta=27\frac{m_1m_2+m_1m_3+m_2m_3}{(m_1+m_2+m_3)^2}$).  Eccentricity zero is the relative equlibrium and eccentricity one is the total collapse solution. Spectral instability occurs in the regions III, IV, V.}
\label{huinst}
\end{figure}

While computing these sectional curvatures we also find the following characterization interesting:

\begin{prp}
\label{prop3}
Consider the JM-metric for a $1/r^\alpha$ potential with $\alpha> 0$ at energy level $h$ and let $K_q(u,v)$ denote the sectional curvature of the JM-metric at $q$ through the plane spanned by $u,v$.

The configuration $q$ is a central configuration if and only if $K_q(q, iq)=-\frac{h\alpha^2 U(q)}{2(h+U(q))^3\| q\|^2}.$

\end{prp}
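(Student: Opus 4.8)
The plan is to exploit the conformal structure of the JM-metric. Writing $g_0$ for the flat mass metric $\langle u,v\rangle=\sum_k m_k\,\mathrm{Re}(u_k\bar v_k)$ with $\|q\|^2=\sum_k m_k|q_k|^2$, the JM-metric is the conformal rescaling $g_{JM}=(h+U)\,g_0$ of the \emph{flat} metric $g_0$. I would start from the standard formula for the sectional curvature under a conformal change of a flat metric: if $g=e^{2f}g_0$ and $\hat u,\hat v$ are $g_0$-orthonormal, then
\[
K(\hat u,\hat v)=e^{-2f}\big[-\mathrm{Hess}_{g_0}f(\hat u,\hat u)-\mathrm{Hess}_{g_0}f(\hat v,\hat v)+(\hat u f)^2+(\hat v f)^2-|\nabla_{g_0}f|^2\big].
\]
Setting $\varphi=h+U$ and $f=\tfrac12\log\varphi$ turns this into an expression in the flat first and second derivatives of $U$ alone. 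Since $\langle q,iq\rangle=0$ and $\|q\|=\|iq\|$ in the mass metric, the complex line $\sigma=\mathbb{C}q$ is spanned by the $g_0$-orthonormal pair $\hat u=q/\|q\|$, $\hat v=iq/\|q\|$, and $K_q(q,iq)=K_q(\hat u,\hat v)$.

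The second step is to evaluate the four scalar ingredients $\hat uU$, $\hat vU$, $\mathrm{Hess}\,U(\hat u,\hat u)$, $\mathrm{Hess}\,U(\hat v,\hat v)$ purely from the symmetries of $U$. Homogeneity $U(\lambda q)=\lambda^{-\alpha}U(q)$, via Euler's relation applied to the straight line $t\mapsto tq$, gives $dU(q)=-\alpha U$ and $\mathrm{Hess}\,U(q,q)=\alpha(\alpha+1)U$. Rotational invariance $U(e^{i\theta}q)=U(q)$, differentiated once and twice in $\theta$ along $\theta\mapsto e^{i\theta}q$ (whose acceleration at $\theta=0$ is $-q$), gives $dU(iq)=0$ and $\mathrm{Hess}\,U(iq,iq)=-\alpha U$. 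The key observation is that \emph{all four of these hold for every configuration}, central or not, so the only quantity in the curvature formula that can detect the central-configuration condition is $|\nabla_{g_0}U|^2=|\nabla_{g_0}\varphi|^2$.

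The third step isolates exactly that term. Decomposing $\nabla_{g_0}U=\tfrac{-\alpha U}{\|q\|^2}q+w$, the relations above show $w\perp q$ and $w\perp iq$, whence $|\nabla_{g_0}U|^2=\alpha^2U^2/\|q\|^2+\|w\|^2$. By definition $q$ is a central configuration precisely when $\nabla_{g_0}U$ is parallel to $q$, i.e. when $w=0$. Substituting all ingredients into the curvature formula, every symmetry-determined term collapses and one is left with the clean identity
\[
K_q(q,iq)=-\frac{h\alpha^2U}{2(h+U)^3\|q\|^2}-\frac{\|w\|^2}{4(h+U)^3}.
\]
Since $h+U\neq0$ on the energy level where $g_{JM}$ is defined, $K_q(q,iq)$ equals the claimed value if and only if $\|w\|^2=0$, i.e. if and only if $q$ is a central configuration; this proves both implications simultaneously.

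I expect the main obstacle to be the second-derivative bookkeeping: keeping the signs and conformal prefactors correct when passing from $f=\tfrac12\log\varphi$ back to $\varphi$ (each $\mathrm{Hess}$ and first-derivative term acquires powers of $\varphi$ and cross terms), and in particular obtaining $\mathrm{Hess}\,U(iq,iq)=-\alpha U$ by correctly accounting for the nonzero acceleration $\gamma''(0)=-q$ of the rotation curve rather than treating it like the straight scaling line. Once these are pinned down, the collapse of the symmetry terms into the single residual $\|w\|^2$ is the conceptual core of the equivalence.
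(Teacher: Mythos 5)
Your proof is correct, and it reaches the result by a genuinely different route from the paper's. The paper derives Proposition \ref{prop3} from two complex-geometric lemmas: Lemma \ref{holtokob}, which writes $K_q(v,iv)$ as Kobayashi's holomorphic sectional curvature $H_q(v)$ minus a nonnegative correction $\sum_{j\ge 2}|\partial_j U|^2/(h+U)^3$, and Lemma \ref{val}, which computes $H_q(q)$ via the extremal-holomorphic-disk definition \eqref{supdef}, showing the supremum is attained by the linear disk $z\mapsto q+zq$ and extracting the value from the relation $g'(0)=-\tfrac{\alpha}{2}g(0)$ --- the holomorphic avatar of your Euler identity. You instead evaluate the real conformal-change formula directly on the plane $\mathrm{span}\{q,iq\}$; note that your quoted formula with $f=\tfrac12\log(h+U)$ is precisely the paper's Lemma \ref{lem:JM} (eq. \eqref{eq:curvature1}, cited from \cite{CM}), which the paper uses for Proposition \ref{prop1} but not for Proposition \ref{prop3}. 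Your four symmetry identities $dU(q)=-\alpha U$, $dU(iq)=0$, $\mathrm{Hess}\,U(q,q)=\alpha(\alpha+1)U$, $\mathrm{Hess}\,U(iq,iq)=-\alpha U$ are all correct (including the sign coming from $\gamma''(0)=-q$ for the rotation curve, which you rightly flag as the delicate step), and substituting them into \eqref{eq:curvature1} does collapse to
\[
K_q(q,iq)=-\frac{h\alpha^2U}{2(h+U)^3\|q\|^2}-\frac{\|w\|^2}{4(h+U)^3},
\]
which I have checked term by term. This residual $\|w\|^2/4$ agrees exactly with the paper's correction, since for real $U$ one has $\sum_{j\ge2}|\partial_jU|^2=\tfrac14\|w\|^2$ with $w$ the component of $\nabla U$ orthogonal to $\mathbb{C}q$; so both arguments hinge on the same mechanism, namely that the curvature deviates from the claimed value by a sign-definite multiple of $\|w\|^2$, and your observation that the rotation-invariance identity $dU(iq)=0$ makes $w=0$ equivalent to $\nabla U=\lambda q$ with $\lambda$ real correctly handles the reality of $\lambda$. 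What each approach buys: yours is more elementary and self-contained, requiring no Kobayashi machinery or extremal disks; the paper's identifies the central-configuration curvature value as the holomorphic sectional curvature $H_q(q)$, an interpretation it reuses in Remark \ref{conj} and in the closing question about negative holomorphic curvature on the reduced space.
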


\begin{rem}
\label{rem2}
The phase space for solutions to eq. \eqref{eq:motion} is $T(\mathbb{C}^N\backslash\Delta)$. If $q$ is a central configuration, then $T(\mathbb{C}q\backslash 0)$ is an invariant subspace. The dynamics restricted to this subspace is that of a $1/r^\alpha$ central force problem. The JM-metric associated to such a problem is $(h+r^{-\alpha})dzd\overline z$, and has Gaussian curvature $-\frac{h\alpha^2 r^{2\alpha-2}}{2(hr^\alpha+1)^3}$. This makes one direction of Proposition \ref{prop3} natural, while conversely we find it interesting that this curvature value in fact determines the central configurations.
\end{rem}

We also consider relating sectional curvatures to Saari's conjectures (see \cite{Fuji15}). In center of mass zero coordinates and with a Newtonian potential ($\alpha=1$), \textit{Saari's original conjecture} poses that the constancy of moment of inertia, $I:=\sum m_k |q_k|^2$, over a solution is equivalent to the solution being a relative equilibrium solution (see figure \ref{fig1}).

\begin{figure}[h]
\begin{center}
\includegraphics[width=0.35\textwidth]{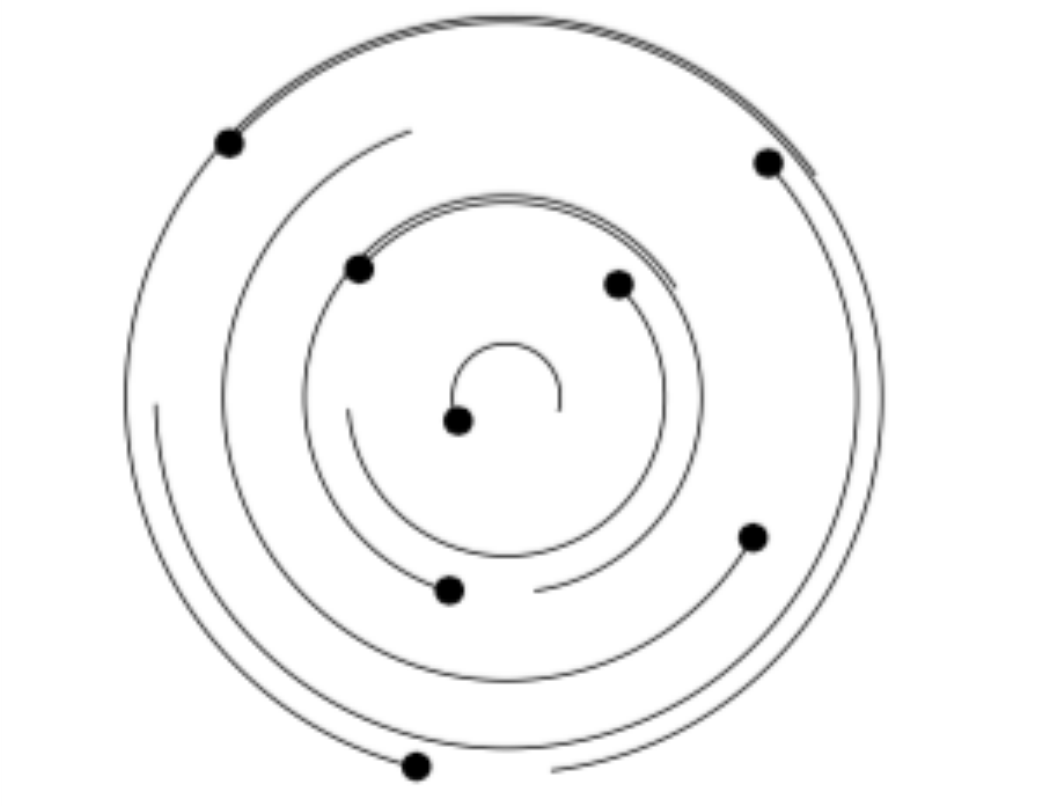}
\end{center}
\caption{This figure from \cite{MoeckelCC} depicts a relative equilibria solution.}
\label{fig1}
\end{figure}

This conjecture was settled affirmatively by Moeckel \cite{MoS}
for three bodies, and \textit{Saari's generalized conjecture} is the same statement extended to general $1/r^\alpha$-potentials. Saari then posed his \textit{homographic conjecture}: that constancy of $$\mu = I^{\alpha/2}U$$ over a solution is equivalent to the solution being homographic. Recall that a solution is called homographic if the configuration, $q(t)$, remains similar to the original configuration, $q(0)$; two configurations being similar if it is possible to pass from one to the other by a scaling transformation and a rotation.

The Lagrange-Jacobi identity \begin{equation}\label{eq:LJ}
 \ddot I = 4H + (4-2\alpha) U
\end{equation}
 shows that the `strong force' potential $\alpha=2$ is exceptional, in particular Saari's general conjecture is false for the strong force (see \cite{GR}) and for $\alpha\ne 2$ we have $\ddot I=const. \iff U=Const.$ Hence by restricting attention to relative equilibria, Saari's generalized conjecture reads (for any $\alpha$): \begin{center}\textit{The constancy of $U$ and $I$ over a solution is equivalent to the solution being a relative equilibrium.}\end{center}

Roughly speaking we seek to detect relative equilibria through sectional curvatures. To interpret Saari's conjectures on relative equilibria geometrically we are motivated by the following plan. First we seek some distribution of planes $\sigma$ and values $C(q)$ such that $K_{q(t)}(\sigma_{q(t)})=C(q(t))$ over a solution to eq. \eqref{eq:mov} is equivalent to the constancy of $U(q(t))$ and $I(q(t))$ over the solution. Next one would seek a result similar to proposition \ref{prop3} relating these curvature values to being a central configuration. Appealing as such a plan sounds, at the moment, it appears difficult and here we only explore the first step of this program.

\begin{prp}
\label{prop1}Let $\alpha>0.$
Let $q(t)$ be a solution of the $N$-body problem with an attractive $1/r^\alpha$ potential and $\dot{q}(t)\neq 0$. Let $ \textbf 1\in \mathbb{C}^N$ be the constant vector having the complex numbers $1+i$ in each coordinate.
If $U(q(t))$ is constant, then the sectional curvature $\displaystyle{K_{q(t)}(\dot{q}(t), \textbf 1)}$ of the  Jacobi-Maupertuis metric   is zero.
\end{prp}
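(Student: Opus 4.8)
The plan is to use that the JM-metric is conformally flat and to reduce the statement to a single cancellation forced by conservation of energy. By Remark \ref{rem2} the JM-metric at energy $h$ is $g = \phi\, g_0$, where $g_0 = \sum_k m_k\,|dq_k|^2$ is the flat mass metric and $\phi = h+U$; note $\phi>0$ along the motion, since conservation of energy reads $\|\dot q\|^2 = 2(h+U) = 2\phi$. I will use the standard formula for the sectional curvature of a conformal change of a flat metric: writing $\phi = e^{2f}$ with $f = \tfrac12\ln\phi$, for any $X,Y$ orthonormal with respect to $g_0$ one has
\[
K_q(X,Y) = e^{-2f}\Big(-\mathrm{Hess}\,f(X,X) - \mathrm{Hess}\,f(Y,Y) + (Xf)^2 + (Yf)^2 - \|\nabla f\|^2\Big),
\]
all derivatives taken with respect to $g_0$. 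This is the conformal-flat specialization of the curvature computation underlying Proposition \ref{prop3}, so the problem becomes one of evaluating first and second $g_0$-derivatives of $\phi$ in the directions $\dot q$ and $\mathbf 1$.

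For the direction $\mathbf 1$ I would exploit translation invariance: because $U$ depends only on the differences $q_i - q_j$, it is invariant under $q\mapsto q + t\,\mathbf 1$, and differentiating this invariance once and twice gives $\langle\nabla U,\mathbf 1\rangle = 0$ and $\mathrm{Hess}\,U(\mathbf 1,\cdot)\equiv 0$. Consequently, with $Y = \mathbf 1/\|\mathbf 1\|$, both $Yf$ and $\mathrm{Hess}\,f(Y,Y)$ vanish, and $\mathrm{Hess}\,f(X,Y) = 0$ for every $X$. For the direction $\dot q$ I would use the hypothesis that $U$ is constant along the solution: differentiating $U(q(t))$ once gives $\langle\nabla U,\dot q\rangle = 0$, so $Xf = 0$ for $X\propto\dot q$ as well. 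Differentiating a second time and inserting the equations of motion in the form $\ddot q = \nabla U$ (the $g_0$-gradient of $U$) yields the key identity $\mathrm{Hess}\,U(\dot q,\dot q) = -\|\nabla U\|^2$.

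To assemble these I would work in the center-of-mass frame, where vanishing linear momentum gives $\langle\dot q,\mathbf 1\rangle = 0$; then $\dot q$ and $\mathbf 1$ are $g_0$-orthogonal (and independent, since $\dot q\neq 0$), so after normalizing they may be substituted into the displayed formula. Every term involving $\mathbf 1$ and every first-derivative term drops out, leaving only the contributions of $\mathrm{Hess}\,f(X,X)$ and $\|\nabla f\|^2$. Rewriting these through $f = \tfrac12\ln\phi$ and substituting $\mathrm{Hess}\,U(\dot q,\dot q) = -\|\nabla U\|^2$ reduces the curvature to the positive multiple $\tfrac{1}{\phi}\|\nabla U\|^2\big(\tfrac{1}{2\phi\|\dot q\|^2} - \tfrac{1}{4\phi^2}\big)$. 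The crux, rather than the routine differentiations, is that this bracket vanishes \emph{exactly} when $\|\dot q\|^2 = 2\phi$, which is precisely conservation of energy; the factors $\tfrac12$ and $\tfrac14$ produced by the conformal curvature formula are matched by the energy relation, giving $K_{q(t)}(\dot q,\mathbf 1) = 0$. I expect the only real care to lie in pinning down the conformal-change coefficients and signs and in verifying the orthogonality (indeed the unreduced computation gives a curvature proportional to $\langle\dot q,\mathbf 1\rangle^2$, which is why the center-of-mass normalization is needed); once these are in place, the vanishing is forced by the energy identity.
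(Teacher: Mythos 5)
Your proposal is correct and takes essentially the same approach as the paper: your conformal-change curvature formula is exactly the paper's Lemma \ref{lem:JM} (eq. \eqref{eq:curvature1}), and your identities $\langle\nabla U,\dot q\rangle=0$ and $\mathrm{Hess}\,U(\dot q,\dot q)=-\|\nabla U\|^2$, combined with the energy relation $\|\dot q\|^2=2(h+U)$, are precisely the specialization to constant $U$ of the paper's identity $8(h+U)^4K_{q(t)}(\dot q,\mathbf 1)=3\left(\frac{dU}{dt}\right)^2-2(h+U)\frac{d^2U}{dt^2}$ from Lemma \ref{lem:aux1} (note $\mathrm{Hess}\,U(\dot q,\dot q)=\langle \overset{...}{q},\dot q\rangle$, so your cancellation is the same one). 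The only difference is organizational: the paper first derives that identity along an arbitrary solution, which it then reuses for Lemma \ref{lem:gato} and Proposition \ref{prop2}, whereas you impose the constancy of $U$ from the outset.
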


\begin{prp}
\label{prop2}Let $\alpha=2$.
Let $q(t)$ be a solution of the planar $N$-body problem  with moment of inertia constant and  $\displaystyle{K_{q(t)}(\dot{q}(t), \textbf 1)}=0$,  then the potential  energy is constant.
\end{prp}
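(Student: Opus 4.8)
The plan is to evaluate $K_{q(t)}(\dot q,\textbf 1)$ directly from the conformal structure of the JM-metric and turn the hypothesis $K=0$ into a single scalar differential equation for the function $t\mapsto U(q(t))$, which the constancy of $I$ will then force to be constant. First I would record that the JM-metric is the conformal rescaling $g_{JM}=(h+U)\,g_0$ of the flat mass metric $g_0=\sum_k m_k\,|dq_k|^2$, and apply the standard formula for how a sectional curvature transforms under $g_0\mapsto e^{2f}g_0$ with $f=\tfrac12\log(h+U)$: for $g_0$-orthonormal $u,v$,
\[ K(u,v)=e^{-2f}\big(-\mathrm{Hess}\,f(u,u)-\mathrm{Hess}\,f(v,v)+(uf)^2+(vf)^2-|\nabla f|^2\big), \]
every term on the right being computed in $g_0$. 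The vector $\textbf 1$ generates the translation $q\mapsto q+s\,\textbf 1$, under which $U$ is invariant; hence $dU(\textbf 1)=0$ and $\mathrm{Hess}\,U(\textbf 1,\textbf 1)=0$, so the $\textbf 1$-terms drop out, while in center-of-mass coordinates $\langle\dot q,\textbf 1\rangle_{g_0}=0$, so $\dot q,\textbf 1$ may be normalized to an orthonormal pair. Writing $W:=h+U$ and using the energy relation $\|\dot q\|_{g_0}^2=2W$ together with $dU(\dot q)=\dot U$, the formula reduces to a positive multiple of
\[ -\frac{\mathrm{Hess}\,U(\dot q,\dot q)}{4W^2}-\frac{|\nabla U|^2}{4W^2}+\frac{3\dot U^2}{8W^3}. \]

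Next I would bring in the dynamics. Since Newton's equations read $\ddot q=\nabla U$ (gradient in $g_0$), differentiating $\dot U=\langle\nabla U,\dot q\rangle$ gives $\ddot U=\mathrm{Hess}\,U(\dot q,\dot q)+|\nabla U|^2$; substituting this makes the two $|\nabla U|^2$ terms cancel. (With $\ddot U=0$ this already reproduces Proposition \ref{prop1}.) For $\alpha=2$ the Lagrange--Jacobi identity \eqref{eq:LJ} becomes $\ddot I=4H$, so the constancy of $I$ forces $H=h=0$ and hence $W=U$. The condition $K=0$ thereby collapses to the clean equation $2U\ddot U=3\dot U^2$, equivalently $\frac{d}{dt}\big(\dot U\,U^{-3/2}\big)=0$; that is, $\frac{d}{dt}\big(U^{-1/2}\big)$ is constant, so the function $U(q(t))^{-1/2}$ is affine in $t$.

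Finally I would rule out a nonzero slope using the constancy of $I$ once more. Constancy of $I=\|q\|^2$ confines the orbit to a fixed inertia sphere $\|q\|=\rho_0$, and the identity $\sum_{i<j}m_im_j|q_i-q_j|^2=M\rho_0^2$ keeps every mutual distance bounded, whence $U$ is bounded below by a positive constant and $U^{-1/2}$ is bounded. If the affine function $U^{-1/2}$ had nonzero slope it would grow without bound in the time direction along which it increases; but there $U$ (hence the speed $\sqrt{2U}$) decreases, no collision occurs ($U\not\to\infty$), and the orbit cannot escape the compact sphere, so the solution extends for all time in that direction and $U^{-1/2}$ would indeed become unbounded, contradicting the bound. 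Therefore the slope is zero, $\dot U\equiv0$, and $U$ is constant.

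I expect the last step to be the main obstacle. The pointwise hypotheses ($K=0$ together with $I$ and $h$ constant) determine only the differential equation $2U\ddot U=3\dot U^2$, whose non-constant solutions describe a homothetic-type spreading of the configuration and are not excluded by any local computation; eliminating them is precisely where the constancy of $I$ is indispensable, entering both as the positive lower bound on $U$ and through the completeness of the motion on the compact inertia sphere.
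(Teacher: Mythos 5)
Your proof is correct and follows essentially the same route as the paper: your conformal-change computation reproduces the paper's formula $8(h+U)^4 K_{q(t)}(\dot q,\textbf 1)=3\dot U^2-2(h+U)\ddot U$ (Lemmas \ref{lem:JM} and \ref{lem:aux1}), the Lagrange--Jacobi identity with constant $I$ forces $h=0$, and the resulting equation $2U\ddot U=3\dot U^2$ integrates to the paper's $U(t)=4C/(t+A)^2$ (your affine $U^{-1/2}$), which is then contradicted by the bound on mutual distances coming from constant $I$. Your explicit completeness argument (no collision since $U$ stays bounded, compact inertia sphere, speed $\sqrt{2U}$ bounded, hence extension to a full half-line where the affine function becomes unbounded) rigorously fills a step the paper passes over with the bare assertion that $U(t)$ ranges over all of $\mathbb{R}^+$, and your formulation via the conserved quantity $\dot U\,U^{-3/2}$ also avoids the case split on $\dot U\neq 0$ made in Lemma \ref{lem:gato}.
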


It is worth pointing out that Proposition \ref{prop2} remains true even if one replaces the condition $I(t)= $ constant by the condition that $q(t)$ is a bounded solution because, by the Lagrange-Jacobi identity with $\alpha = 2$, every bounded solution must have zero energy and constant moment of inertia $I$.

As for the condition on $\alpha$ in Proposition \ref{prop2}, it is well known that for $\alpha\neq 2$, and again using the Lagrange-Jacobi identity that the constancy of $I$ over a solution $q(t)$ implies that the potential energy $U$ is constant over the solution.

Finally, as a direct application of Proposition \ref{prop2} and Theorem 1 in \cite{Fuji15} (which we reference here as Theorem \ref{fuji} below), we obtain the following consequence.

\begin{cor}
\label{coro1}Let $\alpha=2$.
Let $q(t)$ be a  solution of the planar $3$-body problem  with  moment of inertia constant  and $\displaystyle{K_{q(t)}(\dot{q}(t), \textbf 1)}=0$, then $q(t)$ is a relative equilibria.
\end{cor}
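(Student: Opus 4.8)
The plan is to obtain this as an immediate combination of Proposition \ref{prop2} with the result of Fujiwara et al. First I would observe that the hypotheses of Corollary \ref{coro1} are exactly those of Proposition \ref{prop2}: we are in the strong-force case $\alpha=2$, the moment of inertia $I(t)=\sum m_k|q_k|^2$ is constant along the solution, and the sectional curvature $K_{q(t)}(\dot q(t),\textbf 1)$ vanishes. Applying Proposition \ref{prop2} directly then yields that the potential energy $U(q(t))$ is constant along the solution.

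Having secured the constancy of $U$, I would combine it with the assumed constancy of $I$ to conclude that $q(t)$ is a solution of the planar three-body problem along which both $U$ and $I$ are simultaneously constant. At this stage I invoke Theorem \ref{fuji} (Theorem 1 of \cite{Fuji15}), which asserts precisely that for the planar three-body problem the simultaneous constancy of $U$ and $I$ over a solution forces that solution to be a relative equilibrium. This gives the desired conclusion and completes the argument.

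The point that requires care, and which I would flag explicitly, is the applicability of Theorem \ref{fuji} in the exceptional case $\alpha=2$. As the discussion of the Lagrange-Jacobi identity \eqref{eq:LJ} makes clear, for the strong force one has $\ddot I=4H$, so constancy of $I$ alone no longer implies constancy of $U$; indeed Saari's generalized conjecture in its cruder form fails for $\alpha=2$ (see \cite{GR}). Consequently one cannot shortcut the argument by using the constancy of $I$ by itself, and the genuine content of the proof lies in first extracting the constancy of $U$ from the curvature hypothesis via Proposition \ref{prop2}. The refined statement characterizing relative equilibria by the \emph{joint} constancy of $U$ and $I$ does remain valid for three bodies, and it is exactly this form that Theorem \ref{fuji} supplies. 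Thus the only substantive step is the passage through Proposition \ref{prop2}; once $U$ and $I$ are both known to be constant, the conclusion is immediate.
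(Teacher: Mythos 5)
Your proposal is correct and follows essentially the same route as the paper: apply Proposition \ref{prop2} to extract the constancy of $U$ from the curvature hypothesis, then invoke Theorem \ref{fuji} together with the constant moment of inertia. The only caveat is that Theorem \ref{fuji} literally concludes the motion is \emph{homographic} (from constancy of $\mu = IU$), so one tacitly uses that a homographic motion with constant $I$ must be a relative equilibrium --- a step the paper glosses over in exactly the same way.
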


\section{Notations}

Consider $N$ point particles of mass $m_k>0$ and positions $q_k\in \mathbb{C}$. The configuration of the system is described by the vector
\[
q=(q_1,\dots  q_N) \in \mathbb{C}^{N}\setminus\triangle
\]
where
$$\triangle =\{q =(q_1, \dots , q_N)\in \mathbb{C}^{N} : q_k=q_l, k\neq l\}$$
consists of all the collisions.

The mass-weighted Hermitian inner product
\begin{equation*}
 \left\langle u,v  \right\rangle_{\mathbb{C}}:=\sum_{k=1}^{N} m_k u_k \bar{v}_k.
\end{equation*}
allows us to write many formulae of celestial mechanics. We call its real and imaginary parts
\begin{equation*}
 \left\langle u,v \right\rangle=\text{Re} \left\langle u,v \right\rangle_{\mathbb{C}} \quad \qquad \omega(u,v)=\text{Im} \left\langle u,v \right\rangle_{\mathbb{C}}
\end{equation*}
the \textit{mass metric} and \textit{mass symplectic structure} on $\mathbb{C}^N$, which are $\mathbb{R}$-bilinear when we restrict scalar multiplication on $\mathbb{C}^N$ to real scalars.

The equations of motion (eq. \ref{eq:mov}) may then be written as
\begin{equation}\label{eq:motion}
\ddot{q}= \nabla U(q), \qquad \langle \nabla U, \cdot\rangle = dU(\cdot)
\end{equation}
where $\nabla$ is the gradient for the mass metric and $U(q)$ is a $1/r^\alpha$ potential. These equations of motion can as well be thought of as the Hamiltonian flow of \begin{equation}\label{eq:totalenergy}
H(q,\dot{q})= \frac12\langle \dot q, \dot q\rangle-U(q)
\end{equation} where the mass metric allows us to translate between $T^*\mathbb{C}^N$ and $T\mathbb{C}^N$ via $m_k\dot q_k = p_k$. In particular the \textit{total energy} $H$ is conserved over the motions. If we take the vector ${\textbf 1}_C:=(1,1,...,1)\in\mathbb{C}^N$ we have the additional conserved quantities:
\begin{equation}\label{eq:motion1}
L=\langle \dot q, {\textbf 1}_C\rangle_{\mathbb{C}} \qquad C=\omega(\dot q, q) = \langle \dot q, iq\rangle,
\end{equation} the \textit{linear momentum} and \textit{angular momentum}. Integrating the linear momentum once we may, as is standard, take the center of mass to be fixed at the origin
\[
\left\langle q, {\textbf 1}_C\right\rangle_{\mathbb{C}}= m_1q_1 + \dots  + m_Nq_N=0.
\]
Using the mass metric, the \textit{moment of inertia}, a measure of the total size of the configuration is given by
\[
I(q) = \left\langle  q, q \right\rangle= \|q \|^2.
\]

Now we consider \textit{central configurations}: those with $\nabla U(q)=\lambda q$.

Fix a configuration $q$. Make the Ansatz that there exists a homographic solution $z(t)q$ of eq. \eqref{eq:motion}. Then we have $$\ddot z |z|^{2+\alpha}z^{-1} q = \nabla U(q)$$ where $t$ only enters in the scalars on the left hand side. It follows that $$\nabla U(q)=\lambda q$$ for some constant\footnote{Note that by the rotation invariance of $U$, one must have $\lambda\in\mathbb{R}$, and by $U$'s homogeneity of degree $-\alpha$, one has $sgn(\lambda) =  sgn(-\alpha)$.} $\lambda$, and $z(t)$ is a solution of the $1/r^\alpha$ central force problem: $$\ddot z = \lambda z/|z|^{2+\alpha}.$$ That is to say, central configurations are exactly those which admit homographic solutions and the dynamics in the invariant plane $\mathbb{C}q$ are those of a $1/r^\alpha$ central force problem.

In particular, at a central configuration, we have the \textit{relative equilibria} solutions of eq. \eqref{eq:motion} rotating around the origin with angular velocity  $\omega\neq 0$, namely,
\[
q(t) =e^{i\omega t}q(0).
\]

Lastly, we recall the Jacobi-Maupertuis reformulation of mechanics (see \cite{Arnold}) which asserts that the solutions to Newton's equations at energy $H=h$ are, after a time reparameterization, precisely the geodesic equations for the
Jacobi-Maupertuis metric
\begin{equation}\label{JM}
ds^2_{JM}=(h+U)ds^2
\end{equation}
on the Hill region $\{q \in \mathbb{C}^N \setminus \triangle \,:\,h+U> 0\} \subset \mathbb{C}^N \setminus \triangle$, and with $ds^2$ the mass metric.

From now on, we will consider the configuration space $\mathbb{C}^N \setminus \triangle$ endowed with the Jacobi-Maupertuis metric and we denote by  $K_q(u,v)$  the sectional curvature of the JM-metric at $q$ through the plane spanned by $u, v$.

\section{Proofs of Results}
Before establishing Theorem \ref{inst} and Proposition \ref{prop3}, let us recall some background.

Given a Riemannian manifold, $(M^{2n},g)$ with metric compatible almost complex structure $J$, we split the complexified tangent space into the $i, -i$ eigenspaces of $J(v\otimes \lambda):=J(v)\otimes \lambda$, $$TM\otimes\mathbb{C}=TM'\oplus TM''.$$ In some local coordinates $(x^j,y^j)$ on $M$ s.t. $J(\partial_{x^j})=\partial_{y^j}$ we then have the bases for $TM'$ and $TM''$ respectively as $\partial_j:=\frac12 (\partial_{x^j}\otimes 1-\partial_{y^j}\otimes i)$ and $\overline \partial_j:=\frac12 (\partial_{x^j}\otimes 1+\partial_{y^j}\otimes i)$.

Now we extend the metric $\mathbb{C}$-linearly to a $\mathbb{C}$-valued symmetric bilinear form on $TM\otimes\mathbb{C}$, by $g(v\otimes \lambda, \cdot):=\lambda g(v,\cdot)$. Using the metric compatibility of $J$ we find $g_{ij}=g(\partial_i, \partial_j)=0=g_{\overline{ij}}$ and $$g_{i\overline j}=\frac12 (g(\partial_{x^i}, \partial_{x^j})+ig(\partial_{x^i},\partial_{y^j}))=g_{\overline i j}$$ and so $$g=g_{i\overline j}(dz^i\otimes d\overline z^j+d\overline z^j\otimes dz^i)=2g_{i\overline j} dz^id\overline z^j$$ where $dz^i=dx^i+idy^i, d\overline z^i=dx^i-idy^i$ are dual to $\partial_i, \overline\partial_i$.

The process above with the usual identification of $\mathbb{R}^2$
 with $\mathbb{C}$ by $z=x+iy$ and $J(x,y)=(-y,x)$ corresponding to multiplication by $i$, yields the familiar operators:\[
\frac{\partial}{\partial z}= \frac{1}{2}\left(\frac{\partial}{\partial x}-i\frac{\partial}{\partial y} \right),
\qquad \qquad
\frac{\partial}{\partial \overline z}= \frac{1}{2}\left(\frac{\partial}{\partial x}+i\frac{\partial}{\partial y} \right),
\] \[
dz=dx+idy,
\qquad \qquad
d\overline z=dx-idy,
\]

With this notation note that
\[
\Delta
= \frac{\partial^2}{\partial x^2}+\frac{\partial^2}{\partial y^2}
= 4 \frac{\partial^2}{\partial  z \partial \overline z},
\] \[
dzd\overline z= dx^2+dy^2.
\]
Let $\displaystyle{\partial=\frac{\partial}{\partial z}}$, $\displaystyle{\overline\partial=\frac{\partial}{\partial \overline z}}$. It then follows  that   a function $z\mapsto f(z)\in\mathbb{C}$ is holomorphic if and only if
$\overline \partial f(z)=0.$

Let $D\subset\mathbb{C}$ be a disk containing 0 and $f:D\to M$ a holomorphic map. Let $K_{f^*g}$ be the Gaussian curvature of $f^*g$ on $D$. For $X\in T_qM\otimes\mathbb{C}\setminus \{0\}$, define \begin{equation}\label{supdef}
    H_q(X):=\sup\{ K_{f^*g}(0) : f:D\to M\text{ holomorphic and } f(0)=q, \mathbb{C} f'(0)=\mathbb{C} X\}
\end{equation}
to be \textit{Kobayashi's holomorphic sectional curvature}. In (\cite{Kob}, ch. 2) Kobayashi  shows that
\begin{equation}\label{KHC}
    H_q(X) = R_{i\overline j k\overline l} X^i\overline X^j X^k\overline X^l
\end{equation}
for a unit vector $X=X^j\partial_j$ (that is $g(X,\overline X)=1$ by extending $g$ $\mathbb{C}$-linearly) and where
\begin{equation}\label{KRC}
    R_{i\overline{j} k\overline{l}} :=-\partial_k\overline \partial_l g_{i \overline{j}}+g^{p\overline{q}}\partial_k g_{i\overline{q}}\overline \partial_lg_{p\overline{j}}.
\end{equation}

To prove Proposition \ref{prop3} we will make use of the following Lemmas:

\begin{lem}
\label{holtokob}
Let $q\in\mathbb{C}^N\backslash\Delta$ and $v\in \mathbb{C}^N$. Complete $v$ to an orthonormal complex mass metric basis $v_1=v/\|v\|, v_2,...,v_N$. Then

$$K_q(v,iv) = H_q(v)-\frac{\sum_{j=2}^N |\partial_j U|^2}{(h+U(q))^3}$$

\end{lem}

\begin{proof}

Take an orthonormal Euclidean basis with $e_1=(1,0,...,0)=\lambda v_1$ and $e_2 = (0,1,0,...,0)=i\lambda v_1,...$ so that we may write the JM-metric as: $ds_{JM}^2 = (h+U)\sum \mu_j dz_jd\overline z_j$ where $\mu_j>0$ are some positive constants depending on the masses. In the real coordinates $(x_j, y_j)$ where $z_j=x_j+iy_j$, we have $ds_{JM}^2 = (h+U)\sum \mu_j (dx_j^2+dy_j^2)$ and $$K(v,iv)=\frac{R_{x_1y_1x_1y_1}}{|\partial_{x_1}\wedge\partial_{y_1}|_{JM}^2}=\frac{R_{x_1y_1x_1y_1}}{\mu_1^2 (h+U)^2}$$ where $R_{ijkl}$ is the Riemannian curvature tensor for $ds_{JM}^2$. In these real coordinates (see eq. \eqref{eq:curvature1}) we compute: $$R_{x_1y_1x_1y_1}=\frac{\mu_1}{2}(-(\partial_{x_1}^2 U + \partial_{y_1}^2U)+\frac{(\partial_{x_1} U)^2+(\partial_{y_1} U)^2}{h+U}-\frac{\sum_{j=2}^N \frac{\mu_1}{\mu_j}((\partial_{x_j} U)^2+(\partial_{y_j} U)^2)}{2 (h+U)})$$ and takes the form in complex coordinates ($\partial_j=\frac{\partial}{\partial z_j}=\frac12 (\partial_{x_j}- i \partial_{y_j})$) $$R_{x_1y_1x_1y_1}=\frac{\mu_1}{2} (-4\partial_1\overline\partial_1 U+ \frac{4 \partial_1 U\overline\partial_1 U}{h+U}-\frac{2\sum_{j=2}^N\frac{\mu_1}{\mu_j} \partial_j U\overline \partial_j U}{h+U}).$$


Now using eqs. \eqref{KHC} and  \eqref{KRC} with $v=\partial_1$ and corresponding unit vector $X=\sqrt{\frac{2}{\mu_1 (h+U)}} \partial_1=\frac{\partial_1}{\sqrt{g_{1\overline 1}}}$ we compute: $$R_{1\overline 1 1\overline 1}=\frac{\mu_1}{2}(-\partial_1\overline\partial_1 U +\frac{\partial_1 U\overline\partial_1 U}{h+U})$$ and then by eq. \eqref{KHC} $$H_q(v)=\frac{4R_{1\overline 1 1\overline 1}}{\mu_1^2 (h+U)^2}=K_q(v,iv)+\frac{\sum_{j=2}^N \frac{1}{\mu_j} \partial_j U\overline\partial_j U}{(h+U)^3}.$$ The formula in the Lemma then follows by rescaling $\partial_j$ by $\frac{1}{\sqrt{\mu_j}}$.

\end{proof}

\begin{lem}
\label{val} Let $q\in\mathbb{C}^N\backslash\Delta$, then the Kobayashi holomorphic sectional curvature (eq. \eqref{supdef}) associated to the JM-metric for a $1/r^\alpha$ potential ($\alpha> 0$) at energy level $h$ has $$H_q(q)=-\frac{h\alpha^2 U(q)}{2(h+U(q))^3\|q\|^2}.$$
\end{lem}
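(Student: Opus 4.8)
The plan is to evaluate the Kobayashi holomorphic sectional curvature $H_q(q)$ directly from the Chern-curvature formula \eqref{KHC}, reusing the computation already carried out in the proof of Lemma \ref{holtokob}. There one finds, for a unit vector along the first coordinate axis, $R_{1\overline 1 1\overline 1} = \frac{\mu_1}{2}\bigl(-\partial_1\overline\partial_1 U + \frac{\partial_1 U\,\overline\partial_1 U}{h+U}\bigr)$ and $H_q(v) = \frac{4R_{1\overline 1 1\overline 1}}{\mu_1^2(h+U)^2}$. So it suffices to specialize $v=q$ and compute the two scalars $\partial_1 U$ and $\partial_1\overline\partial_1 U$ at $q$ in the direction of $q$ itself.

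First I would pick mass-orthonormal complex coordinates in which $q$ points along the first axis, $q=\|q\|e_1$ (so that $\mu_1=1$); since $H_q(q)$ is coordinate independent this is harmless. The single input needed is the homogeneity of the potential: because $|wq_i-wq_j|=|w|\,|q_i-q_j|$, we have $U(wq)=|w|^{-\alpha}U(q)=(w\overline w)^{-\alpha/2}U(q)$ for $w\in\mathbb{C}$. Moving along $e_1$ away from $q$ is exactly rescaling $q$ by the complex factor $w=1+\zeta/\|q\|$, so differentiating this identity in $w$ at $w=1$ gives $\partial_1 U=-\frac{\alpha}{2\|q\|}U(q)$, and differentiating once more in $\overline w$ gives $\partial_1\overline\partial_1 U=\frac{\alpha^2}{4\|q\|^2}U(q)$.

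Substituting these into the expression for $R_{1\overline 1 1\overline 1}$ yields
\begin{equation*}
R_{1\overline 1 1\overline 1}=\frac{\alpha^2 U(q)}{8\|q\|^2}\Bigl(-1+\frac{U(q)}{h+U(q)}\Bigr)=-\frac{\alpha^2 h\,U(q)}{8\|q\|^2\,(h+U(q))},
\end{equation*}
and then $H_q(q)=\frac{4R_{1\overline 1 1\overline 1}}{(h+U(q))^2}$ collapses to the claimed value $-\frac{h\alpha^2 U(q)}{2(h+U(q))^3\|q\|^2}$.

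As a conceptual check (and an alternative route) the same number arises geometrically: by the homogeneity above, the map $\Phi(w)=wq$ pulls the JM-metric back to the rotationally symmetric conformal metric $\bigl(h+|w|^{-\alpha}U(q)\bigr)\|q\|^2\,dw\,d\overline w$ on the $w$-plane, whose Gaussian curvature $-\frac{2}{\rho}\partial_w\partial_{\overline w}\log\rho$ at $w=1$ reproduces the stated curvature — this is precisely the central-force curvature of Remark \ref{rem2}. The subtle point I would flag is the interpretation of $H_q(q)$ as a supremum: the line $\mathbb{C}q$ gives only the a priori bound $H_q(q)\ge K_{\Phi^*g}(1)$, and $\mathbb{C}q$ is not totally geodesic unless $q$ is a central configuration, so it is not obvious that this disk is extremal. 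The Chern-curvature computation sidesteps this entirely, since by Kobayashi's identity \eqref{KHC} the contraction $R_{i\overline j k\overline l}X^i\overline X^j X^k\overline X^l$ already equals the supremum; the agreement of the two computations then shows, a posteriori, that the radial disk is extremal. The only genuine bookkeeping hazard is keeping track of the convention factor $g_{i\overline j}=\tfrac12(h+U)\mu_i\delta_{ij}$ together with the matching factor $4$ in $H_q(v)$, which is exactly where a spurious factor of two can creep in.
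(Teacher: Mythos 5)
Your proof is correct, but it takes a genuinely different route from the paper's. The paper works directly with the supremum definition \eqref{supdef}: it splits the Gaussian curvature of a pulled-back disk as in \eqref{lin}, shows via Cauchy--Schwarz that the supremum is attained by the linear disk $z\mapsto q+zq$, and then computes that disk's curvature from \eqref{logCS}, where homogeneity enters through the identity $g'(0)=-\frac{\alpha}{2}g(0)$ for the summands $g_{ij}$ of $U$ --- essentially the same Euler-type relation you use when differentiating $U(wq)=(w\overline w)^{-\alpha/2}U(q)$. You instead bypass disks altogether: you invoke Kobayashi's identity \eqref{KHC}, reuse the Chern-curvature component $R_{1\overline 1 1\overline 1}=\frac{\mu_1}{2}\bigl(-\partial_1\overline\partial_1 U+\frac{\partial_1 U\,\overline\partial_1 U}{h+U}\bigr)$ already derived in the proof of Lemma \ref{holtokob}, and feed in the radial derivatives $\partial_1 U=-\frac{\alpha}{2\|q\|}U$ and $\partial_1\overline\partial_1 U=\frac{\alpha^2}{4\|q\|^2}U$ obtained from homogeneity; both of these values and your bookkeeping (the factor $g_{1\overline 1}=\frac12(h+U)$ in mass-orthonormal coordinates together with the $4$ in $H_q(v)=4R_{1\overline 1 1\overline 1}/(h+U)^2$) check out, and yield the stated value. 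What each approach buys: yours is shorter and makes the extremality question moot, but it leans on the cited identity \eqref{KHC} --- which, as you should note, must hold for Hermitian (not merely K\"ahler) metrics with the Chern curvature \eqref{KRC}, since the JM-metric is not K\"ahler for $N\ge 2$ --- and on the computation inside Lemma \ref{holtokob}; the paper's proof is self-contained modulo the definition \eqref{supdef}, and as a byproduct its Cauchy--Schwarz step proves directly the extremality of the linear disk that you flag as ``not obvious,'' so your a posteriori conclusion about the radial disk is exactly what the paper establishes head-on. Your closing consistency check via the pulled-back rotationally symmetric metric $\bigl(h+|w|^{-\alpha}U(q)\bigr)\|q\|^2\,dw\,d\overline w$ on $\mathbb{C}q$ is also correct and matches Remark \ref{rem2}.
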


\begin{proof}
We will make use of the following general formula in the proof: Let $g=(g_1,...,g_k):D\to \mathbb{C}^k$ be holomorphic, then \begin{equation}\label{logCS} -\partial\overline\partial \log (c+ \| g\|^2)= \frac{|\langle g, g'\rangle |^2- \|g\|^2\|g'\|^2- c\| g'\|^2}{(c+\|g\|^2)^2}.
\end{equation} Let $f:D\to\mathbb{C}^N$ be a holomorphic map with $f(0)=q$ and $f'(0)=\lambda v$ (later we will set $v=q$). Then Kobayahi's holomorphic sectional curvature (eq. \eqref{supdef}) is the supremum over all such maps of the Gaussian curvature of $f^*(h+U)ds^2 = (h+U(f(z))\|f'(z)\|^2 dzd\overline z=:2\rho_f dzd\overline z$ at $z=0$. This Gaussian curvature is given by $-\frac{\partial\overline\partial \log\rho_f}{\rho_f}|_{z=0}$.

First we can see that the supremum is attained by a linear map $z\mapsto q+zv$ by considering \begin{equation}\label{lin}-\frac{\partial\overline\partial \log\rho_f}{\rho_f} = -\frac{\partial\overline\partial \log (h+U(f(z))}{\rho_f} - \frac{\partial\overline\partial \log \|f'(z)\|^2}{\rho_f}.\end{equation} Using that $f=(f_1,...,f_N)$ is holomorphic (so $\overline\partial f_k=0$) The first term of \eqref{lin} is $$\rho_f^{-1}(\frac{(\sum \overline\partial_j U \overline{\partial f_j})(\sum \partial_kU \partial f_k)}{(h+U)^2}-\frac{\sum\partial_k\overline\partial_j U \partial f_k\overline{\partial f_j}}{h+U})$$ which contains no second derivatives of $f$. As $f(0)=q$ is fixed and $f'(0)$ is fixed up to complex scaling, this first term when evaluated at $z=0$ is then independent of choice of $f$.

By applying eq. \eqref{logCS} to the second term of \eqref{lin}, we obtain $$\frac{|\langle f',f''\rangle|^2 - \|f'\|^2\|f''\|^2}{\|f'\|^4}$$ which by Cauchy-Schwarz achieves its supremum of 0 exactly when $f''(0)=\lambda f'(0)$, for instance when $f(z)=q+zv$.

To evaluate $H_q(\mathbb{C} v) = -2\frac{\partial\overline\partial \log(h+U(q+zv))}{(h+U(q))\|v\|^2}|_{z=0}$ set $$g_{ij}(z)=\frac{\sqrt{m_im_j}}{(q_i-q_j + z(v_i-v_j))^{\alpha/2}}$$ so that $U(q+zv) = \|g\|^2$ and $$g_{ij}'(z) = -\frac{\alpha}{2}\frac{\sqrt{m_im_j}(v_i-v_j)}{(q_i-q_j + z(v_i-v_j))^{1+\alpha/2}}.$$

Now taking $v=q$ we have $g'(0) = -\frac{\alpha}{2} g(0)$ and so by eq. \eqref{logCS} $$-\partial\overline\partial \log(h+ U(q+zq))|_{z=0} = -\frac{h\alpha^2 U(q)}{4(h+  U(q))^2}.$$  Hence $$H_q(q) = -\frac{h\alpha^2 U(q)}{2(h+U(q))^3\|q\|^2}.$$
\end{proof}

\begin{rem}\label{conj}

Let $q$ be a central configuration of a $1/r^\alpha$ potential and note that by Lemma \ref{holtokob}, and Cauchy-Schwarz inequality on the first two terms of eq. \eqref{logCS}, we have: $$K_q(v,iv)<-h\cdot Cst.$$  for $v\notin \mathbb{C}q$ and Cst. some positive constant (depending on $q$ and $v$). Now for $v\in \mathbb{C}q^\perp$ and from eq. \eqref{eq:curvature1}, we have $K_q(q,iq)+K_q(v,iv) = K_q(q,v) + K_q(iq,iv)$,  and so when $h\in [0,\infty)$: $$0>K_q(q,iq)+K_q(v,iv) = K_q(q,v) + K_q(iq,iv).$$ In particular, at least one of these sectional curvatures $K_q(q,v)$ or $K_q(iq,iv)$ is negative for $h\in [0,\infty)$. Because the curvatures depend continuously on the metric, whichever one is negative remains so for $h\in (-\epsilon^2, \infty)$.

In particular, for $0<\alpha<2$ and negative energies, we have a plane with negative sectional curvature tangent to the relative equilibria solution $\sigma = \text{span}\{ iq, iv\}$ or negative sectional curvature over the plane tangent to the total collapse solution: $\sigma = \text{span}\{ q, v\}$. Using that rotations are a symmetry of the metric, and scaling preserves signs of the sectional curvatures, the plane can be extended to a family of planes tangent to the solutions with negative sectional curvatures. 

\end{rem}

\begin{proof}[Proof of Proposition \ref{prop3}]

Take a complex orthonormal basis $v_1,...,v_N$ with $v_1=q/\|q\|$, then:

The configuration $q$ is a central configuration $\iff \nabla U(q) =\lambda q \iff 0 = \langle \nabla U (q), v_j\rangle = \partial_j U$ for $j>1 \iff H_q(q) = K_q(q,iq)$, where we have used Lemma \ref{holtokob} for the last equivalence. The value of $K_q(q,iq)$ along relative equilibria is then given by the computation in Lemma \ref{val}.

\end{proof}

\begin{proof}[Proof of Theorem \ref{inst}]

The proof will use the following properties of geodesic flow and Jacobi fields.  First, let $\gamma$ be a unit speed geodesic in a surface and $J = \lambda \dot\gamma^{\perp}$ a normal Jacobi field along $\gamma$. Then $\ddot\lambda = -K_{\gamma}(\dot\gamma, J)\lambda$.  Second, the linearization of the geodesic flow, $\phi_t:TM\to TM$, is given by $$d\phi_t(\xi) = (J_\xi(t), \dot J_{\xi}(t)),$$ where $J_\xi(t)$ is the unique Jacobi field whose initial condition $(J_\xi(0), \dot J_\xi(0))$ corresponds to $\xi\in TTM$.  In particular, to examine the Poincar\'e return map of $\phi_t$ along a closed geodesic $\gamma$ we will examine the growths of Jacobi fields. Now let $q$ be a central configuration for a $1/r^{\alpha}$ potential with $\alpha\ge 2$ and consider the totally geodesic surface $\mathbb{C}^{*}q$.

First we take the case when $\alpha > 2$. The only periodic homographic motions are the relative equilibria, occurring at positive energy levels. Let $\gamma(t) = e^{i \omega t}q$ be the relative equilibrium through $q$ at energy level $h>0$. With $\omega$ chosen so that $\gamma$ is a unit speed geodesic of the JM-metric.

Take a normal Jacobi field $J(t) = \lambda(t) \gamma(t)$. By Proposition \ref{prop3}, and the rotation symmetry of the metric, $$K_{\gamma}(\dot\gamma, J) = K_{e^{i\omega t}q}(ie^{i \omega t}q, e^{i \omega t}q) = K_{q}(iq, q) =-c^2<0.$$ Then $\lambda$ satisfies the second order differential equation $\ddot\lambda = c^2\lambda$.  For the normal Jacobi field with initial condition $J(0) = q, \dot J(0) = cq$, we have $J(2\pi / \omega) = e^{2\pi c/\omega} J(0), \dot J(2\pi /\omega) = e^{2\pi c/\omega}\dot J(0)$ and thus an eigenvalue $e^{2\pi c/\omega} \ne 1$ of the return map. These relative equilibria are spectrally instable.

Now we take $\alpha = 2$. Again the only periodic homographic motions are relative equilibria, but now they occur when $h=0$. Taking the same notations as in the $\alpha >2$ case, we have $\ddot \lambda = 0$. Hence a normal Jacobi field with initial condition $J(0), \dot J(0)$ has $J(2\pi /\omega) = 2\pi /\omega \dot J(0)+ J(0), \dot J(2\pi /\omega) = J(0)$. Hence we have a non-diagonalizable Jordan-block of the return map. These relative equilibria are linearly instable.

\end{proof}

In order to establish the remaining results, it will be necessary to use some auxiliary results. The first one is about the sectional curvature of the configuration space and can be found in \cite{CM}.

\begin{lem} \label{lem:JM}
The sectional curvature of $\mathbb{C}^{N}\setminus \triangle$ endowed with the  Jacobi-Maupertuis metric $(h+U)ds^2$ is given by
\begin{equation}
\label{eq:curvature1}
(h+U)^3 K(\sigma)
=
\frac{3}{4}\left((\partial_1 U)^2+(\partial_2 U)^2\right)- \frac{1}{4} \left\|\nabla U\right\|^2-\frac{h+U}{2}\left(\partial_1^2 U + \partial_2^2 U\right),
\end{equation}
where $\partial_a U$ denotes $dU(v_a)$ and $a=1,2$ with
$v_1, v_2$ are $ds^2$-orthonormal vectors spanning $\sigma \subset \mathbb{C}^{N}$. The $\left\|\,\right\|$ and
$\nabla$ refer to the norm and Levi-Civita connection for the mass metric.
\end{lem}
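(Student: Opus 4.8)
The plan is to recognize the Jacobi--Maupertuis metric as a conformal rescaling of the flat mass metric and to apply the standard transformation law for sectional curvature under a conformal change. Write $\tilde g = (h+U)\,ds^2 = e^{2f}g$, where $g=ds^2$ is the mass metric and $f=\tfrac12\log(h+U)$. The decisive simplification is that $g$, being a constant-coefficient positive-definite form on $\mathbb{C}^N\cong\mathbb{R}^{2N}$, is flat; hence its sectional curvature vanishes identically, and in linear coordinates its Levi-Civita connection is trivial, so covariant Hessians with respect to $g$ reduce to ordinary second directional derivatives.

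First I would record the conformal transformation law. For $\tilde g=e^{2f}g$ and $g$-orthonormal vectors $v_1,v_2$ spanning $\sigma$, the sectional curvatures are related by
$$e^{2f}\,\tilde K(\sigma)=K_g(\sigma)-\operatorname{Hess}_g f(v_1,v_1)-\operatorname{Hess}_g f(v_2,v_2)+(v_1 f)^2+(v_2 f)^2-\|\nabla_g f\|^2.$$
This is the classical formula (derivable from the Koszul expression for the conformally changed connection, and checkable against the model case of stereographic projection turning the flat plane into the round sphere). Since $K_g(\sigma)=0$, only the $f$-terms survive. Note that the in-plane derivatives $v_1f, v_2 f$ appear, while the gradient norm $\|\nabla_g f\|^2$ is taken over all of $\mathbb{C}^N$; this is the source of the asymmetry between the projected quantity $(\partial_1U)^2+(\partial_2U)^2$ and the full $\|\nabla U\|^2$ in the final formula.

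Next I would express each term through $U$. Differentiating $f=\tfrac12\log(h+U)$ gives $v_a f=\dfrac{\partial_a U}{2(h+U)}$, so that $(v_1f)^2+(v_2f)^2=\dfrac{(\partial_1U)^2+(\partial_2U)^2}{4(h+U)^2}$ and $\|\nabla_g f\|^2=\dfrac{\|\nabla U\|^2}{4(h+U)^2}$. For the Hessian, using $\operatorname{Hess}_g\!\log(h+U)=\dfrac{\operatorname{Hess}_g U}{h+U}-\dfrac{dU\otimes dU}{(h+U)^2}$ together with the flatness of $g$ (so that $\operatorname{Hess}_g U(v_a,v_a)=\partial_a^2 U$), one obtains
$$\operatorname{Hess}_g f(v_1,v_1)+\operatorname{Hess}_g f(v_2,v_2)=\frac{1}{2}\,\frac{\partial_1^2U+\partial_2^2U}{h+U}-\frac{1}{2}\,\frac{(\partial_1U)^2+(\partial_2U)^2}{(h+U)^2}.$$
Substituting these into the transformation law and clearing denominators by multiplying through by $(h+U)^3$ collects the coefficient of $(\partial_1U)^2+(\partial_2U)^2$ as $\tfrac12+\tfrac14=\tfrac34$ and reproduces exactly equation \eqref{eq:curvature1}.

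Finally, I should note that the expression is manifestly independent of the chosen $g$-orthonormal basis of $\sigma$: the quantities $(\partial_1U)^2+(\partial_2U)^2=\|\operatorname{proj}_\sigma\nabla U\|^2$ and $\partial_1^2U+\partial_2^2U=\operatorname{tr}_\sigma\operatorname{Hess}_g U$ are intrinsic to $\sigma$, as is $\|\nabla U\|^2$, so the formula is well defined. I expect the only genuine subtlety to be fixing the signs and normalizations in the conformal transformation law; the safest route is to derive it directly, computing the Christoffel symbols of $e^{2f}g$ in $g$-orthonormal linear coordinates (where $g$'s own symbols vanish) and reading off the curvature tensor, rather than quoting a formula whose sign conventions may differ. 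Everything after that step is routine differentiation of $\log(h+U)$.
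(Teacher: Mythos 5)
Your proof is correct: with $f=\tfrac12\log(h+U)$ and the flat mass metric, the conformal transformation law you state (whose sign conventions check out on the model $y^{-2}\left(dx^2+dy^2\right)$, which it correctly assigns curvature $-1$) reproduces \eqref{eq:curvature1} exactly after multiplying through by $(h+U)^3$, with the coefficient bookkeeping $\tfrac12+\tfrac14=\tfrac34$ as you say. The paper itself gives no proof of this lemma, only a citation to \cite{CM}, and your conformal-rescaling derivation is the standard argument behind that cited formula, so your write-up is essentially the intended proof made self-contained.
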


We will see that the expression \eqref{eq:curvature1} becomes manageable when the plane $ \sigma $ is spanned by the vectors $ \dot {q} (t) $, $ \textbf {1}$. The constant vector $\textbf{1}$ is  formed by the complex numbers of the form $1 + i$ in each coordinate. More specifically,  we  will need the following lemma for the sectional curvature on $\sigma$. Let us observe that Proposition \ref{prop1}  is a direct consequence of the following result.

\begin{lem}\label{lem:aux1}
Let $q(t)$ be a solution of the $N$-body problem with an attractive $1/r^\alpha$ potential with center of mass zero. Suppose that $\dot{q}(t) \neq 0$ for all $t$ where the curve $q(t)$ is defined.
Then the sectional curvature $K_{q(t)}(\dot q(t), \textbf{1})$ of the Jacobi-Maupertuis metric along $q(t)$, satisfies:
\begin{equation}
\label{eq:curvature2}
8(h+ U)^4 K_{q(t)}( \dot{q}(t), \textbf{1})
=3\left( \frac{dU}{dt} \right)^2 -2(h+U)\frac{d^2U}{dt^2}.
\end{equation}
\end{lem}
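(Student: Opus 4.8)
The plan is to apply the general curvature formula of Lemma \ref{lem:JM} directly to the plane $\sigma = \operatorname{span}\{\dot q(t), \textbf 1\}$, after choosing a convenient orthonormal pair. First I would note that $\dot q$ and $\textbf 1$ are already orthogonal in the mass metric: since the center of mass is fixed at the origin, the linear momentum $\langle \dot q, \textbf{1}_C\rangle_{\mathbb{C}}$ vanishes, and because $\textbf 1 = (1+i)\,\textbf{1}_C$ we get $\langle \dot q, \textbf 1\rangle_{\mathbb{C}} = (1-i)\langle \dot q, \textbf{1}_C\rangle_{\mathbb{C}} = 0$, so in particular the real part $\langle \dot q, \textbf 1\rangle$ is zero. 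Hence I can take $v_1 = \dot q/\|\dot q\|$ and $v_2 = \textbf 1/\|\textbf 1\|$ as the $ds^2$-orthonormal basis of $\sigma$ required by \eqref{eq:curvature1}.

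Next I would exploit the translation invariance of $U$. Because $U$ depends only on the differences $q_i-q_j$ and $\textbf 1$ is a pure translation direction, the map $s\mapsto U(q+s\,\textbf 1)$ is constant; consequently every directional derivative of $U$ along $\textbf 1$ vanishes. In the notation of \eqref{eq:curvature1} this means $\partial_2 U = dU(v_2)=0$ and $\partial_2^2 U = 0$, so those contributions drop out. The surviving $v_1$-terms are then $(\partial_1 U)^2 = \|\dot q\|^{-2}(dU/dt)^2$, using $dU(\dot q)=dU/dt$, and $\partial_1^2 U = \|\dot q\|^{-2}\,D^2U(\dot q,\dot q)$, where $D^2U$ is the ordinary Hessian (the mass metric is flat, so its Levi-Civita connection is the ordinary directional derivative).

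The one genuinely dynamical input is converting the spatial Hessian $D^2U(\dot q,\dot q)$ into the time derivative $d^2U/dt^2$. Differentiating $dU/dt = \langle \nabla U, \dot q\rangle$ along the solution and invoking the equation of motion $\ddot q = \nabla U$ from \eqref{eq:motion} gives $d^2U/dt^2 = D^2U(\dot q,\dot q) + \|\nabla U\|^2$, hence $D^2U(\dot q,\dot q) = d^2U/dt^2 - \|\nabla U\|^2$. Finally I would substitute the energy relation $\|\dot q\|^2 = 2(h+U)$, valid on the level $H=h$, into the two factors $\|\dot q\|^{-2}$. The pleasant feature is that after this substitution the term $-\tfrac14\|\nabla U\|^2$ of \eqref{eq:curvature1} exactly cancels the $\|\nabla U\|^2$ produced by the Hessian conversion; collecting the remaining terms and clearing denominators by multiplying through by $8(h+U)$ yields \eqref{eq:curvature2}.

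I expect the only delicate points to be bookkeeping rather than conceptual: verifying that $\dot q$ and $\textbf 1$ are genuinely independent so that $\sigma$ is a $2$-plane, and correctly separating the purely spatial Hessian $D^2U(\dot q,\dot q)$ from the trajectory-dependent term $\langle \nabla U, \ddot q\rangle$ when differentiating $U$ twice in time. Both the cancellation of $\|\nabla U\|^2$ and the appearance of the extra factor $(h+U)$ raising $(h+U)^3$ to $(h+U)^4$ hinge on the energy identity $\|\dot q\|^2 = 2(h+U)$, which is precisely what couples the kinematic quantities $dU/dt$ and $d^2U/dt^2$ to the geometric sectional curvature.
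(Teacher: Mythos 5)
Your proposal is correct and takes essentially the same route as the paper: both apply the curvature formula of Lemma \ref{lem:JM} to the orthonormal pair $v_1=\dot q/\|\dot q\|$, $v_2=\textbf{1}/\|\textbf{1}\|$, kill the $\partial_2$-terms using translation invariance (the paper phrases this as $\nabla U$ lying in the center-of-mass-zero subspace), and convert $\partial_1 U$ and $\partial_1^2 U$ into $dU/dt$ and $d^2U/dt^2$ via $\ddot q=\nabla U$ and the energy relation $\|\dot q\|^2=2(h+U)$. Your direct double differentiation of $t\mapsto U(q(t))$, with the resulting cancellation of $\|\nabla U\|^2$ against the $-\tfrac14\|\nabla U\|^2$ term, is just a cleaner packaging of the paper's computation via $\langle \overset{...}{q},\dot q\rangle = D^2U(\dot q,\dot q)$ leading to equation \eqref{eq:perro}, so no substantive difference remains.
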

\begin{proof}
Consider the directions $v_1=\frac{\dot{q}}{\|\dot{q}\|}, v_2=\frac{\textbf{1}}{\|\textbf{1}\|}$. Since the center of mass as fixed at the origin of the system, the vectors $v_1$ and $v_2$ are $ds^2$-orthonormal.
 It follows directly that
\begin{equation}\label{eq:1lem}
\partial_1 U=\left\langle \nabla U,v_1 \right\rangle=\frac{\left\langle\nabla U,\dot{q} \right\rangle}{\|\dot{q}\|}=\frac{\left\langle\ddot{q},\dot{q} \right\rangle}{\|\dot{q}\|}.
\end{equation}

We consider the functions $g_k(t):=\nabla_{x_k} U(q(t))=\ddot{x}_k(t)$, for $k=1,\ldots,2N,$ where $q=(x_1,\ldots,x_{2N})$.
Since $\nabla_{x_k} U = \frac{1}{m_k}  \frac{\partial U}{\partial x_k}$ we have
\[
\dot{g}_k(t)= \frac{1}{m_k}\sum_{l=1}^{2N}\dot{x}_l(t) \frac{\partial^2 U}{\partial x_k \partial x_l}(q(t))=\overset {...}{x}_k(t)
\]
and
\[
\left\langle\dot{q},\overset {...}{q} \right\rangle=\left\langle \dot{q},(\dot{g}_1,\dots,\dot{g}_{2N})\right\rangle=\sum_{k,l=1}^{2N} \dot{x}_k(t) \dot{x}_l(t)
\frac{\partial^2 U}{\partial x_k \partial x_l}(q(t)).
\]
On the other hand, we also have that
\[
\left\langle \nabla_{\dot{q}}\left\langle \nabla U,\dot{q}\right\rangle,\dot{q} \right\rangle = \sum_{k,l=1}^{2N} \dot{x}_k(t) \dot{x}_l(t)
\frac{\partial^2 U}{\partial x_k \partial x_l}(q(t)).
\]
Thus, we obtain the relation $
\left\langle \nabla_{\dot{q}}\left\langle \nabla U,\dot{q}\right\rangle,\dot{q} \right\rangle= \left\langle\dot{q},\overset {...}{q} \right\rangle.$
Now we can determine the term $\partial_1^2 U$:
\[
\partial_1^2 U= \left\langle\nabla_{\dot{q}} \left\langle \nabla U,v_1\right\rangle,v_1 \right\rangle
=\|\dot{q}\|^{-2}\left\langle \nabla_{\dot{q}}\left\langle \nabla U,\dot{q}\right\rangle,\dot{q} \right\rangle.
\]
Thus,
\begin{equation}\label{eq:2lem}
\partial_1^2 U=\frac{\left\langle \dot{q},\overset {...}{q} \right\rangle}{\|\dot{q}\|^2}.
\end{equation}
Let us now consider the term $\partial_2 U$. Since $\nabla U \in \{q\in \mathbb{C}^N \setminus \triangle \,:\,\sum m_k q_k =0\}$, we obtain
\begin{equation}\label{eq:3lem}
\partial_2 U=\left\langle \nabla U,v_2 \right\rangle =\frac{\left\langle \nabla U,\textbf{1} \right\rangle}{\|\textbf{1}\|}=0.
\end{equation}
Similarly,  $\partial_2^2 U=0$.

Substituting  \eqref{eq:1lem}, \eqref{eq:2lem}, \eqref{eq:3lem} into \eqref{eq:curvature1} leads to the equation
\[
4 (h+U)^3  K_{q(t)}( \dot{q}(t), \textbf{1})=\frac{3}{4} \frac{ \langle \ddot{q},\dot{q} \rangle^2}{\| \dot{q}\|^2}-\frac{1}{4} \|\ddot{q}\|^2 - \frac{h+U}{2}  \frac{ \langle \overset {...}{q},\dot{q} \rangle}{\| \dot{q}\|^2}.
\]
On the other hand, from \eqref{eq:totalenergy}, we get $ 2(h+U)= \|\dot{q}\|^2$. Therefore,  multiplying the above equation by $4\|\dot{q}\|^2$ we obtain
\begin{equation}\label{eq:perro}
8 (h+U)^4  K_{q(t)}( \dot{q}(t), \textbf{1})=3 \langle \ddot{q},\dot{q} \rangle^2 -2 (h+U ) ( \langle \ddot{q},\ddot{q} \rangle +   \langle \overset {...}{q},\dot{q} \rangle).
\end{equation}
Since the total energy  is constant along solutions, $ \frac{dH}{dt}=0= \langle \ddot{q},\dot{q} \rangle-\frac{dU}{dt} $.
Thus, substituting
\[
\frac{dU}{dt}  = \langle \ddot{q},\dot{q} \rangle \quad\text{and}\quad
 \frac{d^2U}{dt^2 }  =\| \ddot{q} \|^2 + \langle \overset {...}{q},\dot{q} \rangle
\
\]
  into \eqref{eq:perro} yields \eqref{eq:curvature2}.
\end{proof}

\begin{rem}
It is not necessary take  $v_2$ as $\frac{\bf{1}}{\| \bf{1} \|}$, but only that the unit vector $v_2$ satisfies $\partial_2 U=\partial_2^2 U=0$
along the $q(t)$. Note that $\partial_2 U= \sum_{k} \eta_k \frac{\partial U}{\partial x_k}$ where $v_2=(\eta_1, \cdots, \eta_{2N})$. 
\end{rem}

\begin{lem}\label{lem:gato}
Under the hypothesis of Lemma \ref{lem:aux1}, the equation
$
K_{q(t)}( \dot{q}(t), \textbf{1})=0$, with $\frac{dU}{dt} \neq 0,$
is equivalent to the first integral
\begin{equation}\label{eq:first-integral}
C \left(  \frac{dU}{dt }  \right)^2 =(h+U)^3,
\end{equation}
where $C$ is a positive integration constant.
\end{lem}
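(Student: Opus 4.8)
The plan is to start from the formula of Lemma \ref{lem:aux1}, namely eq.~\eqref{eq:curvature2}, which expresses $8(h+U)^4 K_{q(t)}(\dot q(t),\textbf 1)$ as $3(dU/dt)^2 - 2(h+U)(d^2U/dt^2)$. Since $h+U>0$ on the Hill region, the factor $(h+U)^4$ never vanishes, so the condition $K_{q(t)}(\dot q(t),\textbf 1)=0$ is equivalent to the second-order ODE
$$3\left(\frac{dU}{dt}\right)^2 = 2(h+U)\frac{d^2U}{dt^2}.$$
The entire task is then to recognize this ODE as the logarithmic derivative of the claimed first integral and to run the equivalence in both directions.

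For the direction ``$K=0 \Rightarrow$ first integral'', I would verify directly that the quantity $(dU/dt)^2/(h+U)^3$ is conserved. Differentiating and applying the quotient rule gives
$$\frac{d}{dt}\left(\frac{(dU/dt)^2}{(h+U)^3}\right) = \frac{dU/dt}{(h+U)^4}\left(2(h+U)\frac{d^2U}{dt^2} - 3\left(\frac{dU}{dt}\right)^2\right),$$
and the bracket on the right vanishes precisely by the ODE above. Hence $(dU/dt)^2/(h+U)^3$ is a constant, which I would name $1/C$. Because $h+U>0$ on the Hill region and $dU/dt\neq 0$ by hypothesis, this constant is strictly positive, so $C>0$ and we obtain $C(dU/dt)^2=(h+U)^3$ with $C$ a positive integration constant, as required.

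For the converse, I would simply differentiate the first integral $C(dU/dt)^2=(h+U)^3$ in $t$, obtaining $2C(dU/dt)(d^2U/dt^2)=3(h+U)^2(dU/dt)$. Dividing through by $dU/dt\neq 0$ and re-substituting $C=(h+U)^3/(dU/dt)^2$ recovers exactly the ODE $3(dU/dt)^2=2(h+U)(d^2U/dt^2)$, which by eq.~\eqref{eq:curvature2} is the statement $K_{q(t)}(\dot q(t),\textbf 1)=0$.

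There is no substantial obstacle here: the lemma is in essence a single integration of the ``curvature-zero'' ODE. The only points demanding care are the strict positivity of the constant $C$ — which rests on $h+U>0$ on the Hill region together with the standing assumption $dU/dt\neq 0$ — and the legitimacy of dividing by $dU/dt$ in the converse, which holds for the same reason. The hypothesis $dU/dt\neq 0$ is exactly what upgrades the two implications to a genuine equivalence rather than a one-sided one.
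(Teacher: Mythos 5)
Your proof is correct and is in essence the same argument as the paper's: everything reduces to one integration of the curvature-zero ODE $3\left(\frac{dU}{dt}\right)^2 = 2(h+U)\frac{d^2U}{dt^2}$ extracted from eq.~\eqref{eq:curvature2}, using $h+U>0$ on the Hill region to discard the factor $(h+U)^4$. The only difference is the direction of travel: the paper separates variables, multiplying by $\left(\frac{dU}{dt}\right)^{-1}$ and integrating logarithmic derivatives to get $3\log(h+U)=2\log\frac{dU}{dt}+\log C$, whereas you differentiate the candidate conserved quantity $\left(\frac{dU}{dt}\right)^2/(h+U)^3$ and check via the quotient rule that its derivative vanishes. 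Your variant is marginally tidier on two points: the paper's $\log\frac{dU}{dt}$ tacitly assumes $\frac{dU}{dt}>0$ (strictly one should integrate to $\log\left(\frac{dU}{dt}\right)^2$ or take absolute values), a sign issue your differentiation sidesteps entirely; and you run the converse direction explicitly, where the paper compresses the equivalence into the single word ``equivalent.'' Both arguments rest on exactly the two facts you flag: positivity of $h+U$ and the standing hypothesis $\frac{dU}{dt}\neq 0$, the latter being what licenses the division in the converse.
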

\begin{proof}

Suppose $K_{q(t)}(\dot q(t), \textbf{1}) =0$, by \eqref{eq:curvature2}, this is equivalent to $\frac{3}{h+U} \left(\frac{dU}{dt}\right)^2=2 \frac{d^2U}{dt^2}$. Now multiply both sides by $ \left(\frac{dU}{dt}\right)^{-1}$ and integrating yields $3\log (h + U) = 2\log \frac{dU}{dt} + \log C$, i.e. \eqref{eq:first-integral}. 
\end{proof}

We will use the Lagrange-Jacobi identity for the homogeneous potential $-U$ of degree $-\alpha$:
\begin{equation}\label{eq:LJ}
\ddot{I}=4H + (4-2\alpha)U
\end{equation}

\begin{proof}[Proof of Proposition \ref{prop2}]
Let $q$ be a bounded solution of the planar $N$-body problem with an attractive $1/r^\alpha$ potential, with $\alpha=2$. The boundedness of $q$ and the Lagrange-Jacobi identity implies the total energy is zero.
We suppose 
 that $\displaystyle{\frac{dU}{dt}\neq 0}$ for some $t$. By Lemma \ref{lem:gato}, we know that $K_{q(t)}(\dot{q}(t),\textbf{1})=0$ is equivalent to the first integral
\[
\displaystyle{C\left(\frac{dU}{dt}\right)^2=U^3},
\]
  where $C$ is some positive constant.
  To solve this separable equation, we rewrite it in the form
  \[
  \sqrt{C} \frac{dU}{U^{3/2}} = dt.
  \]
  Integrating both sides gives $U(t)=\frac{4C}{(t+A)^2}$
  where $A$ is some constant. In particular, we have $\{U(t)) \}=\mathbb{R}^+$. Hence $U(t)\rightarrow 0$  
  implies:
\[
\limsup r_{ij}= \infty, \qquad \text{for all }\, 1\leq i<j\leq N,
\]
giving a contradiction. Indeed, the moment of inertia can be written as
\[
\displaystyle{I=\frac{1}{M}\sum_{i<j}  m_i m_j r_{ij}^2},
\] where $M=m_1+\cdots+m_N$,
which by hypothesis is constant.
It follows that $U(t)$ must be constant.
\end{proof}


For the proof of Corollary \ref{coro1}, in addition to Proposition \ref{prop2} we also need the following result given by Fujiwara et. al    \cite[Theorem 1]{Fuji15}, which is the solution to the  Saari's homographic conjecture in the planar $3$-body
problem for general masses with $\alpha=1$ and $\alpha=2$. In our case we only need this result for $\alpha = 2$.

\begin{teo}\label{fuji}\cite{Fuji15}
For the planar $3$-body problem with  $\alpha= 2$.  If a motion has constant $\mu=I U$, then the motion is homographic.
\end{teo}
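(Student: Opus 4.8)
The plan is to reduce the full planar three‑body dynamics to an autonomous system on the shape sphere and to show that constancy of $\mu$ forces the shape to be stationary. First I would pass to center‑of‑mass coordinates and use the rotation symmetry $q\mapsto e^{i\theta}q$ together with scaling to describe a configuration by its size $\rho=\sqrt{I}$, an overall phase $\psi$, and a shape point $n$ on the shape sphere $S^2$ (the Hopf quotient of the unit sphere in the center‑of‑mass‑zero space $\mathbb{C}^2$). The decisive feature of the strong force $\alpha=2$ is homogeneity: since $U$ is homogeneous of degree $-2$ and rotation invariant, $U(q)=I^{-1}\tilde U(n)$ for a function $\tilde U$ on the shape sphere (smooth away from the three binary‑collision points, where it tends to $+\infty$), whence
\[
\mu=IU=\tilde U(n).
\]
Thus $\mu$ is a function of the shape alone, and the hypothesis ``$\mu$ constant'' is exactly the statement that $n(t)$ stays on a single level set of $\tilde U$.

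Next I would write the reduced equations of motion. Fixing the conserved angular momentum and performing Routh reduction, the shape satisfies on $(S^2,\text{round})$ an equation of the form $\frac{D\dot n}{dt}+\frac{\dot I}{I}\dot n=\frac{\text{const}}{I^2}\nabla\tilde U(n)$, where $D/dt$ is the covariant derivative and $\nabla$ the round‑sphere gradient. The friction‑like term is removed by the Sundman reparametrization $d\tau=dt/I$: a short computation turns this into the autonomous Newton equation $\frac{Dn'}{d\tau}=\kappa\,\nabla\tilde U(n)$ with $\kappa>0$ and $'=d/d\tau$ (the Lagrange--Jacobi identity makes $I(t)$ quadratic in $t$, but the change of variable renders the precise size evolution irrelevant). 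This system conserves the shape energy $\tfrac12|n'|^2-\kappa\tilde U(n)$, so imposing $\tilde U(n)=\mu$ forces $|n'|$ to be constant: the shape moves at constant speed along a level set of $\tilde U$.

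The heart of the matter is to deduce $n'\equiv0$. Differentiating $\tilde U(n(\tau))=\mu$ twice and substituting the equation of motion yields the pointwise identity $\kappa|\nabla\tilde U(n)|^2+\mathrm{Hess}\,\tilde U(n',n')=0$. At a regular value of $\mu$ this gives $\nabla\tilde U\ne0$ and shows $\mathrm{Hess}\,\tilde U$ is negative in the direction tangent to the level curve; comparing with the intrinsic relation $\mathrm{Hess}\,\tilde U(T,T)=-\kappa_g\langle\nabla\tilde U,\nu\rangle$ valid along a level curve $\Gamma$ (unit tangent $T$, normal $\nu$, geodesic curvature $\kappa_g$) pins $\kappa_g$ to a definite nonzero sign at every point of $\Gamma$. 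I would then seek a contradiction via Gauss--Bonnet applied to the region $\Omega$ bounded by $\Gamma$: the divergence theorem rewrites $\oint_\Gamma\kappa_g\,ds$ as a multiple of $\iint_\Omega\Delta\tilde U\,dA$, which must be reconciled with $\oint_\Gamma\kappa_g\,ds=2\pi\chi(\Omega)-\iint_\Omega K_0\,dA$, where $K_0>0$ is the curvature of the round shape sphere. Closing this requires the explicit critical‑point structure of the three‑body potential: the two equilateral minima at the poles, the three collinear (Euler) saddles, and the three binary‑collision points on the equator where $\tilde U\to+\infty$.

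The main obstacle is precisely this last step: ruling out a nonconstant constant‑speed orbit confined to a level curve of $\tilde U$. The regular‑value analysis via Gauss--Bonnet must be complemented by a separate treatment of the finitely many singular levels (those through the Euler saddles), where an orbit confined to the level set can only limit to the critical point and hence again has $n'\equiv0$. Once $n'\equiv0$ is established, $\nabla\tilde U(n_0)=0$, so the fixed shape $n_0$ is a central configuration and $q(t)=\rho(t)e^{i\psi(t)}q_0$ is homographic, as claimed. I expect the bookkeeping of signs in the Gauss--Bonnet estimate and the case distinctions at the saddle values of $\tilde U$ to be the most delicate part of the argument.
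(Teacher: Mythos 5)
Your proposal cannot be checked against a proof in this paper: Theorem \ref{fuji} is imported verbatim from Fujiwara, Fukuda, Ozaki and Taniguchi \cite{Fuji15}, whose argument proceeds by explicit computations and estimates in shape variables, not by the reduction-plus-Gauss--Bonnet scheme you sketch. Judged on its own terms, your setup starts well: for $\alpha=2$, homogeneity indeed gives $U=I^{-1}\tilde U(n)$, so constant $\mu$ confines the shape to a level set of $\tilde U$, and at \emph{zero} angular momentum the Sundman change $d\tau=dt/I$ really does cancel the friction term and yields an autonomous equation $Dn'/d\tau=\kappa\,\nabla\tilde U(n)$ with constant $\kappa$ (the Lagrange--Jacobi identity $\ddot I=4H$ decouples the size). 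But there are two genuine gaps. First, for angular momentum $C\neq0$ your reduced equation is wrong: Routh reduction contributes, besides the amended potential $C^2/2I$ (harmless here, being shape-independent), a gyroscopic force coming from the curvature of the mechanical connection of the Hopf fibration, which survives the reparametrization as a magnetic term $c\,C\,J(n')$, with $J$ rotation by $\pi/2$ on $TS^2$. Conservation of $\tfrac12|n'|^2-\kappa\tilde U$ still holds (the magnetic force is orthogonal to $n'$), but your key pointwise identity becomes $\kappa|\nabla\tilde U|^2+\mathrm{Hess}\,\tilde U(n',n')+c\,C\langle\nabla\tilde U,J(n')\rangle=0$, and along a level curve $J(n')$ is \emph{parallel} to $\nabla\tilde U$, so the extra term is $\pm c\,C\,|\nabla\tilde U|\,|n'|$, generically nonzero. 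This destroys the sign-pinning of the geodesic curvature $\kappa_g$ on which your entire Gauss--Bonnet plan rests; and $C\neq0$ is not a side case, since the relative equilibria that the theorem must capture all have nonzero angular momentum.

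Second, even at $C=0$ the decisive step is missing, by your own admission: ruling out a nonconstant constant-speed orbit on a level set of $\tilde U$ is precisely the content of the homographic conjecture, and the constraint you derive, $v^2\kappa_g=\kappa|\nabla\tilde U|>0$, does not visibly contradict Gauss--Bonnet. For a level curve $\Gamma$ bounding a disk $\Omega$ around one of the binary-collision points (where $\tilde U\to+\infty$), the relation $\oint_\Gamma\kappa_g\,ds=2\pi-\iint_\Omega K_0\,dA$ can perfectly well hold with everywhere-positive $\kappa_g$, so no contradiction arises without quantitative information on $\tilde U$ --- exactly the hard estimates that occupy \cite{Fuji15}. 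Your dismissal of the singular levels is also too quick: a constant-speed orbit on the level set through an Euler saddle would reach the critical point in \emph{finite} $\tau$ (finite arclength at speed $v>0$), and there your identity is consistent rather than contradictory, because the level-set tangent directions at a saddle are null directions of $\mathrm{Hess}\,\tilde U$. In short, your reduction is a reasonable framework (and correct as stated only for $C=0$), but the argument fails for $C\neq0$ and remains an open program, not a proof, at its crucial final step.
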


\begin{proof}[of Corollary \ref{coro1}]
Let  $\alpha=2$. Let $q(t)$ be a  solution of the planar $3$-body problem with  $I(t)= $constant and $\displaystyle{K_{q}(\dot{q}, \textbf 1)}=0$.
Again, from the Lagrange-Jacobi identity,  we have that  $H=h = 0$. Hence, by Proposition \ref{prop2} the solution has  constant potential energy. By  Theorem \ref{fuji}, it is immediate that   the motion $q(t)$ must be  a relative equilibrium.
\end{proof}

\section{Questions}

When $\alpha=2$ and $h=0$, does the non-positive holomorphic sectional curvature persist on the reduced space, that is under the quotient by translations and complex scaling, and would there be any dynamical consequences of such negative holomorphic sectional curvature?

We recall that for $\alpha =2$, the reduced space is the quotient of the conguration space $\mathbb{C}^N\setminus\Delta$  by the translations, rotations \textit{and scalings}, see e.g. \cite[Section 2]{CM}. Only when $\alpha=2$ and $h=0$, is the scaling also a symmetry of the JM-metric. For $N>3$ and sectional curvatures through arbitrary planes, the answer to the question is no, i.e. there are $2$-planes in the reduced space at which the sectional curvature is positive, see again \cite[Theorem 1]{CM}. However, in \cite{JM} we
proved that the parallelogram subproblem, which corresponds to a totally geodesic
two-dimensional surface within the reduced space, has non-positive Gaussian curvature. The tangent planes to this surface of parallelogram configurations are complex planes. See also \cite{M2005, M} for more on the reduction process
and some dynamical consequences for planar three-body problems with different potentials.

\section*{Acknowledgments}
Thanks to Wei Yuan of Sun Yat-Sen University, who first brought attention to the holomorphic sectional curvature and Alain Chenciner for lending his copy of Kobayashi's Hyperbolic Complex Spaces. Also we thank Richard Montgomery and Jacques F\'ejoz for their encouragement to pursue dynamical consequences and Rick Moeckel and Manuele Sorprete for their interest and comments. Josu\'e Mel\'endez  is partially supported by SEP-PRODEP, UAM-PTC-638, M\'exico. Connor Jackman was supported by the National Science Foundation under Grant No. DMS-1440140, and the National Security Agency under Grant No. H98230-18-1-0188. We are grateful to the referees for a careful reading of theorem 1 which led us to realize some serious gaps in the original proof.

\end{document}